\documentclass{amsart}
\usepackage{amssymb,url,xspace, amsthm}
\usepackage{mathtools}
\usepackage{bigints}
\usepackage{multirow}
\usepackage{hyperref}
\usepackage[all]{xy}

\newtheorem{theorem}{Theorem}[section]
\newtheorem{lemma}[theorem]{Lemma}
\newtheorem{props}[theorem]{Proposition}

\theoremstyle{definition}
\newtheorem{definition}[theorem]{Definition}

\newtheorem*{theorem*}{Theorem}

\theoremstyle{remark}

\numberwithin{equation}{section}

\newcommand{\N}{\mathbb{N}}
\newcommand{\E}{\mathcal{E}}

\newcommand{\T}{\mathbb{T}}
\newcommand{\C}{\mathbb{C}}
\newcommand{\D}{\mathbb{D}}
\newcommand{\Q}{\mathbb{Q}}

\DeclareMathOperator*{\esssup}{ess\,sup}



\begin{document}

\title{Complex interpolation of families of Orlicz sequence spaces}

\author{Willian Hans Goes Corr\^ea}
\address{Departamento de Matem\'atica, Instituto de Matem\'atica e Estat\'istica, Universidade de S\~ao Paulo, Rua do Mat\~ao 1010, 05508-090 S\~ao Paulo SP, Brazil}
\email{willhans@ime.usp.br}
\thanks{The present work was supported by S\~ao Paulo Research Foundation (FAPESP), processes 2016/25574-8 and 2018/03765-1.}


\subjclass[2010]{Primary 46B70.}

\date{}

\dedicatory{}

\begin{abstract}
We study the complex interpolation and derivation process induced by a
family of Orlicz sequence spaces. We present a concrete example of interpolation family of three spaces inducing a centralizer that cannot be obtained from complex interpolation of two spaces.
\end{abstract}

\maketitle

\section{Introduction}
Complex interpolation induces a homogeneous (usually nonlinear) map called the \emph{derivation map} (see the next section for the definition). Given a K\"othe function space $X$ on a Polish space $S$ with measure $\mu$, a \emph{centralizer} is a homogeneous map $\Omega : X \rightarrow L_0(\mu)$ for which there is a constant $C > 0$ such that for every $x \in X$, $u \in L_{\infty}(\mu)$ we have
\[
\|\Omega(ux) - u\Omega(x)\|_X \leq C \|u\|_{\infty} \|x\|_X
\]

In \cite{KaltonNonlinear, KaltonKothe} Kalton showed that the derivations induced by complex interpolation of families of K\"othe function spaces are centralizers. Conversely, given a centralizer $\Omega$ on a superreflexive K\"othe function space $X$ there is an interpolation family of K\"othe function spaces such that the interpolation space at 0 is $X$ and the induced derivation is $\Omega$. If the centralizer is real (i.\ e., $\Omega(f)$ is a real function if $f$ is real) then a family of two spaces is enough to induce $\Omega$; otherwise, three spaces are enough.

Kalton's results leave unanswered the question of whether three spaces are necessary to obtain any centralizer. The answer is yes (see Section \ref{sec:preliminary}). The aim of this work is to give a concrete example of family of three K\"othe sequence spaces inducing a centralizer not induced from interpolation of two spaces.

In the search of an example one is tempted to use interpolation scales of $\ell_p$ spaces or of $\ell_{p, q}$ spaces. However, the reiteration result of \cite{Stability} shows that this is bound to fail. A natural candidate then is a family formed by Orlicz spaces. In Section \ref{sec:interOrlicz} we present a detailed treatment of complex interpolation of families of Orlicz sequence spaces and their associated derivations. Finally, in Section \ref{sec:example} we obtain the desired example.

\section{Complex interpolation and derivations}

We present our results in the context of Kalton's interpolation method for K\"othe function spaces \cite{KaltonKothe}. A comparison with the classical method (\cite{Coifman1982, Hernandez}) can be seen at \cite{Stability}. In what follows $\C^{\N}$ is endowed with the product topology.

Kalton's definition of K\"othe function space is not the classic one. A K\"othe function space $X$ on $\N$ is a linear subspace of $\C^{\N}$ with a norm $\|.\|_X$ which makes it into a complete Banach space such that (for $x \in \C^{\N} \setminus X$ we make the convention that $\|x\|_X = \infty$):
\begin{enumerate}
    \item $B_X$ is closed in $\C^{\N}$, where $B_X$ is the closed unit ball of $X$;
    \item For every $x, y \in \C^{\N}$, if $y \in X$ and $\left|x\right| \leq \left|y\right|$ then $x \in X$ and $\|x\|_X \leq \|y\|_X$;
    \item There are $h, k \in \C^{\N}$ strictly positive such that for every $x \in \C^{\N}$ we have $\|xh\|_1 \leq \|x\|_X \leq \|xk\|_{\infty}$.
\end{enumerate}

As a comparison, in the standard definition one asks that the characteristic function of every finite set is in $X$. Kalton's definition implies that $\chi_{\{n\}} \in X$ for every $n \in \N$, since by (3) there is a positive element in $X$.

Let us consider the unit circle $\T$ together with normalized Haar measure $\lambda$.

\begin{definition}[Strongly admissible family]
A \emph{strongly admissible family} is a family $\mathcal{H} = \{X_w\}_{w \in \mathbb{T}}$ of K\"othe function spaces on $\N$ such that
\begin{enumerate}
    \item The map $(w, x) \mapsto \|x\|_w$ from $\T \times \C^{\N}$ into $[0, \infty]$ is Borel;
    \item There are $h, k \in \C^{\N}$ strictly positive such that for every $x \in \C^{\N}$ and a.\ e.\ $w \in \T$ we have $\|xh\|_1 \leq \|x\|_w \leq \|xk\|_{\infty}$;
    \item There is a linear subspace $V$ of $\C^{\N}$ of countable dimension such that $V \cap B_{X_w}$ is $\C^{\N}$-dense in $B_{X_w}$ for a.\ e.\ $w \in \T$.
\end{enumerate}
A family that satisfies only (1) and (2) is called \emph{admissible}.
\end{definition}

We will be interested in finite families:

\begin{definition}
An admissible family $\mathcal{H} = \{X_w\}_{w \in \T}$ is \emph{finite} if there are $n \in \N$, a family $\{X^j : 1 \leq j \leq n\}$ of K\"othe function spaces and a function $f : \T \rightarrow \{1, ..., n\}$ such that $X_w = X^{f(w)}$ for every $w \in \T$ and $f^{-1}(j)$ is an arc for $1 \leq j \leq n$. If we denote $A_j = f^{-1}(j)$, then we also write $\mathcal{H} = \{X^j; A_j\}$. If $X^j \neq X^k$ for $1 \leq j < k \leq n$, we say that $\mathcal{H}$ is a \emph{family of $n$ spaces}.
\end{definition}

Let $\D$ denote the open unit disk.

\begin{definition}
Let $\mathcal{H} = \{X_w\}_{w \in \T}$ be an admissible family. The space $\mathcal{N}^+ = \mathcal{N}^+(\mathcal{H})$ is the space of functions $F : \D \rightarrow \C^{\N}$ such that $F(.)(n) : \D \rightarrow \C$ is in the Smirnov class $N^+$ for every $n \in \N$ and
\[
\|F\|_{\mathcal{N}^+} = \esssup_{w \in \T} \|F(w)\|_{X_w} < \infty
\]
where $F(w)$ is the radial limit of $F$ at $w$ in $\C^{\N}$. For $z \in \D$ the K\"othe sequence space $X_z$ is
\[
X_z = \{F(z) : F \in \mathcal{N}^+\}
\]
with the quotient norm. Let $B : X_z \rightarrow \mathcal{N}^+$ be a homogeneous function for which there is $C \geq 1$ such that $\|B(x)\|_{\mathcal{N}^+} \leq C \|x\|_{X_z}$ and $B(x)(z) = x$ for every $x \in X_z$. The \emph{derivation} $\Omega_z : X_z \rightarrow \C^{\N}$ is defined as $\Omega_z(x) = B(x)'(z)$.
\end{definition}

In most cases, Kalton's method of interpolation agrees with the classical complex method for families from \cite{Coifman1982} (see \cite{Stability}). Kalton showed that the derivations induced by complex interpolation of K\"othe function spaces are centralizers (see the definition in the Introduction). Notice that the exact expression of $\Omega_z$ depends on the choice of $B$, but another choice induces the same centralizer up to bounded equivalence:

\begin{definition}
Let $X$ be a K\"othe function space on $\N$. Two centralizers $\Omega$ and $\Psi$ on $X$ are called \emph{equivalent} if there is a linear map $L : X \rightarrow \C^{\N}$ such that $\Omega - \Psi - L : X \rightarrow X$ is bounded, in the sense that there is a constant $C > 0$ such that
\[
\|\Omega(x) - \Psi(x) - L(x)\|_{X} \leq C \|x\|_X
\]
for every $x \in X$. If we may take $L = 0$ then $\Omega$ and $\Psi$ are said \emph{boundedly equivalent}, and we write $\Omega \sim \Psi$. If there is $\lambda \in \C$, $\lambda \neq 0$ such that $\Omega$ is equivalent to $\lambda \Psi$ then $\Omega$ and $\Psi$ are said \emph{projectively equivalent}. If $\Omega$ is boundedly equivalent to $\lambda \Psi$, then $\Omega$ and $\Psi$ are \emph{boundedly projectively equivalent}. $\Omega$ is said to be \emph{trivial} if it is equivalent to 0.
\end{definition}

We shall use an alternative description of $X_z$ that is akin to Lovanozkii factorization.

\begin{definition}
Let $\mathcal{H} = \{X_w\}_{w \in \mathbb{T}}$ be an admissible family. The space $\E = \E(\mathcal{H})$ is the space of all functions (up to almost everywhere equivalence) $\phi : \N \times \T \rightarrow \C$ such that
\[
\|\phi\|_\E = \esssup \|\phi(., w)\|_{X_w} < \infty
\]
\end{definition}

Let $P(r, t)$ denote the Poisson kernel, that is, $P(r, t) = \frac{1 - r^2}{1 - 2r \cos(t) + r^2}$.

\begin{definition}[\cite{KaltonKothe}, Theorem 3.3]
Let $\mathcal{H} = \{X_w\}_{w \in \mathbb{T}}$ be an admissible family. Then $X_z$ coincides with the space of all $x \in \C^{\N}$ such that there is $\phi \in \E$, $\phi \geq 0$, with
\begin{equation}\label{eq:def-xz}
\left|x(n)\right| = \exp(\frac{1}{2\pi}\int_{-\pi}^{\pi} P(r, \theta - t) \log \phi(n, e^{it}) dt)
\end{equation}
for every $n \in \N$. Furthermore, $\|x\|_{X_z} = \inf \|\phi\|_\E$, where the infimum is over all $\phi$ satisfying \eqref{eq:def-xz}.
\end{definition}

We remark that \cite[Theorem 3.3]{KaltonKothe} is stated only for strongly admissible families, but the result is valid for admissible families in general. We shall also use an alternative description for the derivation $\Omega_z$ that may be easily derived from the proof of \cite[Theorem 3.3]{KaltonKothe}.

\begin{definition}\label{def:inducedcent}
Let $\mathcal{H} = \{X_w\}_{w \in \T}$ be an admissible family and $z \in \D$. A map $B_z : X_z \rightarrow \E$ is a \emph{factorization map for $\mathcal{H}$ at $z$} if there is $C > 0$ such that for every $x \in X_z$ and $\lambda \in \C$
\begin{enumerate}
    \item $B_z(x) \geq 0$;
    \item $B_z(\lambda x) = \left|\lambda\right| B_z(x)$
    \item $\left|x(n)\right| = \exp(\frac{1}{2\pi}\int_{-\pi}^{\pi} P(r, \theta - t) \log B_z(n, e^{it}) dt)$;
    \item $\|B_z(x)\|_{\E} \leq C \|x\|_{X_z}$.
\end{enumerate}
\end{definition}

\begin{props}\label{props:inducedcent}
Let $\mathcal{H}$ be an admissible family and $B_z$ a factorization map for $\mathcal{H}$ at $z$. Then
\[
\Omega_z(x)(n) = \frac{x(n)}{\pi} \int_{-\pi}^{\pi} \frac{e^{it}}{(e^{it} - z)^2} \log B_z(x)(n, e^{it}) dt
\]
\end{props}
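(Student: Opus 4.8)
The plan is to start from the definition of the derivation as $\Omega_z(x) = B(x)'(z)$ for an appropriate lift $B : X_z \to \mathcal{N}^+$, and to connect such a lift with the factorization map $B_z$ via the standard outer-function construction. Given $x \in X_z$, set $\phi = B_z(x) \in \E$, so $\phi \geq 0$ and condition (3) of Definition~\ref{def:inducedcent} holds. For each $n$ define the analytic function on $\D$
\[
G_n(\zeta) = \exp\Bigl(\frac{1}{2\pi}\int_{-\pi}^{\pi} \frac{e^{it} + \zeta}{e^{it} - \zeta}\, \log \phi(n, e^{it})\, dt\Bigr),
\]
i.e.\ the outer function with boundary modulus $\phi(n, e^{it})$; one checks that $F(\zeta)(n) = G_n(\zeta)$ defines an element $F \in \mathcal{N}^+(\mathcal{H})$ with $\|F\|_{\mathcal{N}^+} \leq \|\phi\|_{\E} \leq C\|x\|_{X_z}$ (using that $\esssup_w \|F(w)\|_{X_w} = \esssup_w \|\phi(\cdot,w)\|_{X_w}$ since $|F(w)(n)| = \phi(n,w)$ a.e.), and with $|F(z)(n)| = |x(n)|$. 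Up to multiplying each coordinate by a unimodular constant (which does not affect the value of the derivative computation once one works coordinatewise, or can be absorbed into a linear correction), one may take $B(x) = F$, so that $\Omega_z(x)(n) = G_n'(z)$.

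The core computation is then just differentiating $G_n$ at $\zeta = z$. Writing $G_n = \exp(u_n)$ where
\[
u_n(\zeta) = \frac{1}{2\pi}\int_{-\pi}^{\pi} \frac{e^{it} + \zeta}{e^{it} - \zeta}\, \log \phi(n, e^{it})\, dt,
\]
we have $G_n'(z) = G_n(z)\, u_n'(z)$, and $G_n(z) = x(n)$ (after the unimodular adjustment; in modulus this is \eqref{eq:def-xz}, and the argument is fixed so that $G_n(z) = x(n)$ exactly). Differentiating under the integral sign,
\[
\frac{\partial}{\partial \zeta}\,\frac{e^{it} + \zeta}{e^{it} - \zeta} = \frac{(e^{it} - \zeta) + (e^{it} + \zeta)}{(e^{it} - \zeta)^2} = \frac{2 e^{it}}{(e^{it} - \zeta)^2},
\]
so
\[
u_n'(z) = \frac{1}{2\pi}\int_{-\pi}^{\pi} \frac{2 e^{it}}{(e^{it} - z)^2}\, \log \phi(n, e^{it})\, dt = \frac{1}{\pi}\int_{-\pi}^{\pi} \frac{e^{it}}{(e^{it} - z)^2}\, \log B_z(x)(n, e^{it})\, dt.
\]
Multiplying by $G_n(z) = x(n)$ yields exactly the claimed formula for $\Omega_z(x)(n)$.

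The main obstacle is not the differentiation but the bookkeeping needed to justify that the outer function $F$ constructed from $B_z(x)$ really is (up to a harmless modification) a valid choice of lift $B$ in the sense required to define $\Omega_z$: one must check that each $G_n$ lies in the Smirnov class $N^+$ (immediate, being an outer function with $\log|G_n|$ having harmonic extension from an $L^1$ boundary function once one verifies $\log\phi(n,\cdot) \in L^1$, which follows from property (3) of a K\"othe function space applied a.e.\ in $w$), that $F \in \mathcal{N}^+(\mathcal{H})$ with the right norm bound, and that the difference between this lift and the canonical one used to define $\Omega_z$ in the excerpt induces the same centralizer up to bounded equivalence --- a point already flagged in the remark preceding Definition~\ref{def:inducedcent} that the alternative description ``may be easily derived from the proof of \cite[Theorem 3.3]{KaltonKothe}.'' A secondary technical point is differentiation under the integral sign: since $z \in \D$ is fixed, the kernel $e^{it}/(e^{it}-\zeta)^2$ is bounded uniformly for $\zeta$ in a neighborhood of $z$, and $\log\phi(n,\cdot) \in L^1(\T)$, so dominated convergence applies and the interchange is legitimate. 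Once these routine justifications are in place, the identity follows from the elementary derivative computation above.
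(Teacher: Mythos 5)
Your proposal is correct and is essentially the argument the paper has in mind: it omits the proof, pointing instead to the proof of \cite[Theorem 3.3]{KaltonKothe}, which is exactly this outer-function construction from the factorization map followed by differentiation of the Herglotz kernel, with the unimodular adjustment making $B(x)(z)=x$ exact and leaving the relation $G_n'(z)=G_n(z)u_n'(z)$ intact. The only points left implicit (all routine) are the homogeneity of the resulting lift $B$, which follows from property (2) of Definition~\ref{def:inducedcent}, and the degenerate coordinates with $x(n)=0$, where the outer function vanishes identically and the formula holds by convention.
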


\begin{definition}
A centralizer $\Omega$ on a K\"othe function space $X$ is \emph{real} if $\Omega(f)$ is a real function for every real function $f \in X$.
\end{definition}

In \cite{KaltonNonlinear, KaltonKothe} Kalton establishes the following:

\begin{theorem}\label{thm:Kalton}
\emph{(1)}   Let $\mathcal{H}$ be a family of two K\"othe function spaces and $z \in \D$. Then $\Omega_z$ is boundedly projectively equivalent to a real centralizer.

\emph{(2)} If $\Omega$ is a (real) centralizer on a superreflexive K\"othe function space $X$ then there is a family of three (two) K\"othe function spaces such that $X_0 = X$ and $\Omega_0$ is boundedly projectively equivalent to $\Omega$.

\end{theorem}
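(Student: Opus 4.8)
The statement is due to Kalton \cite{KaltonNonlinear, KaltonKothe}; here is the line of argument I would follow.

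For \emph{(1)}, I would first normalize the base point. Given an automorphism $\varphi$ of $\D$ with $\varphi(0)=z$, precomposition $F\mapsto F\circ\varphi$ is an isometry of $\mathcal{N}^+(\{X_w\})$ onto $\mathcal{N}^+(\{X_{\varphi(w)}\})$: the Smirnov class is conformally invariant, $\varphi|_\T$ preserves null sets in both directions so the $\esssup$ defining the norm is unchanged, and it carries a bounded selection at $z$ to one at $0$. By the chain rule the two derivations then differ only by the nonzero factor $\varphi'(0)$, so $\Omega_z$ is boundedly projectively equivalent to the derivation at $0$ of the transported family, which is again a family of two spaces, now on two complementary arcs $A_1',A_2'$. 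Next I would compose with a rotation $w\mapsto e^{i\theta}w$ chosen so that $A_1'$ and $A_2'$ become symmetric under $w\mapsto\bar w$; this is possible because a pair of complementary arcs is determined by its two endpoints $p,q$, and one only needs $e^{2i\theta}=\overline{pq}$. Finally, given any factorization map $B_0$ for the resulting conjugation-symmetric family, I would symmetrize it by setting $\tilde B_0(x)(\zeta)=\tfrac12\bigl(B_0(x)(\zeta)+\overline{B_0(\bar x)(\bar\zeta)}\bigr)$; reflection preserves $\mathcal{N}^+$ and, because the arcs are conjugation-symmetric, preserves the $\E$-norm, so $\tilde B_0$ is again a factorization map. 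By Proposition \ref{props:inducedcent} the associated derivation sends a real sequence $x$ to $\Re B_0(x)'(0)$, hence is a real centralizer, which proves \emph{(1)}.

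For \emph{(2)}, I would again reduce to $z=0$ by the automorphism argument above and treat the real case first. Given a real centralizer $\Omega$ on the superreflexive space $X$, the construction is to deform $X$ along $\Omega$: for small $\epsilon>0$ let $X^{\pm}$ be the Köthe space with $\|x\|_{X^{\pm}}\approx\|\,e^{\mp\epsilon\,\Omega(x)/x}\,x\,\|_X$, the exponent being homogeneous of degree $0$ in $x$ since $\Omega$ is a centralizer, and take the two-space family with $X^{+}$ on the upper semicircle and $X^{-}$ on the lower one. Since both arcs have measure $\tfrac12$ and the Poisson kernel at $0$ is uniform, $X_0$ is exactly the Calderón geometric mean of $X^{+}$ and $X^{-}$, namely $X$; computing $\Omega_0$ via Proposition \ref{props:inducedcent} with $\int_0^{\pi}e^{-it}\,dt=-2i=-\int_{-\pi}^{0}e^{-it}\,dt$ gives $\Omega_0\sim\tfrac{-2i}{\pi}\,\Omega$, so $\Omega_0$ is boundedly projectively equivalent to $\Omega$. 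For a general centralizer $\Omega$, I would split $\Omega=\Omega^s+i\Omega^a$ with $\Omega^s(f)=\tfrac12(\Omega(f)+\overline{\Omega(\bar f)})$ and $\Omega^a(f)=\tfrac{1}{2i}(\Omega(f)-\overline{\Omega(\bar f)})$; conjugating the defining inequality for $\Omega$ shows both are centralizers, and both are real. One then uses a family of three spaces, each a deformation of $X$ as above in a direction $u_j$ depending on $\Omega^s,\Omega^a$, supported on three arcs $A_1,A_2,A_3$ in general position: the requirements $X_0=X$ and $\Omega_0\sim c\,\Omega$ become the linear conditions $\sum_j\mu(A_j)u_j=0$ and $\sum_j\bigl(\int_{A_j}e^{-it}\,dt\bigr)u_j=\pi c\,\Omega(x)/x$, which are solvable because $\sum_j\int_{A_j}e^{-it}\,dt=0$ while the individual moments $\int_{A_j}e^{-it}\,dt$ span $\C$ over $\R$ for three arcs in general position.

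The main obstacle is the technical heart of \emph{(2)}: showing that the $\Omega$-deformed spaces $X^{\pm}$ (resp. $X^j$) are honest Köthe function spaces in the sense above for $\epsilon$ small, and that the identities $X_0=X$ and $\Omega_0\sim c\,\Omega$ hold exactly rather than only up to equivalence of ambient data. This is precisely where superreflexivity enters Kalton's argument — through uniform convexity, which controls the deformation $e^{s\Omega}$ uniformly for $|s|\le\epsilon$ — and it is the part I would expect to have to reproduce in detail rather than sketch.
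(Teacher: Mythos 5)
A preliminary remark: the paper offers no proof of Theorem \ref{thm:Kalton} at all --- it is imported verbatim from Kalton \cite{KaltonNonlinear, KaltonKothe} and used as a black box in Sections \ref{sec:preliminary}--\ref{sec:example}. So your proposal can only be compared with Kalton's original arguments; its broad lines (conformal transfer of the base point to $0$, symmetrization for a two-space family, and deformation of $X$ in the direction $e^{\pm\epsilon\,\Omega(x)/x}$ with three arcs handling $\Omega=\Omega^s+i\Omega^a$ for the converse) are indeed the right ones.

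Two concrete problems remain. First, in part (1) your symmetrization $\tilde B_0(x)(\zeta)=\tfrac12\bigl(B_0(x)(\zeta)+\overline{B_0(\bar x)(\bar\zeta)}\bigr)$ is stated for the object to which you then apply Proposition \ref{props:inducedcent}, namely a factorization map in the sense of Definition \ref{def:inducedcent}; but such a map is nonnegative (so the conjugation does nothing) and, more importantly, the arithmetic average destroys condition (3): the log-convexity computation in Proposition \ref{props:concave} only gives $\exp\bigl(\tfrac1{2\pi}\int P\log\tilde B_0\bigr)\geq\left|x(n)\right|$, not equality, so $\tilde B_0$ is no longer a factorization map. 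The step can be repaired either by averaging geometrically, $\tilde B_0(x)(n,w)=\bigl(B_0(x)(n,w)\,B_0(\bar x)(n,\bar w)\bigr)^{1/2}$, which does preserve (1)--(4) at $z=0$ once the arcs are conjugation-symmetric (using $X_{\bar w}=X_w$ and log-convexity of K\"othe norms), and then Proposition \ref{props:inducedcent} gives a real value on real $x$; or by applying your arithmetic average to the analytic selection $B:X_0\to\mathcal{N}^+$ (where only $B(x)(0)=x$, homogeneity and the norm bound must be preserved, and they are) and computing $\Omega_0$ from the definition rather than from Proposition \ref{props:inducedcent}. As written, mixing the two formulations, the step fails. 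Second, and more substantially, the entire content of part (2) --- that $\|e^{\mp\epsilon\Omega(x)/x}x\|_X$ defines, for small $\epsilon$, genuine K\"othe function spaces in the sense of Section 2 (this is exactly where superreflexivity and Kalton's quantitative estimates for centralizers on superreflexive lattices are needed), that one gets $X_0=X$ exactly and $\Omega_0$ boundedly projectively equivalent to $\Omega$, and the analogous verification for the three-arc family --- is acknowledged but deferred. That verification is Kalton's theorem; with it deferred, your text is a plausible roadmap of his proof rather than a proof.
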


The \emph{K\"othe dual} $X'$ of a K\"othe space $X$ is defined as the space of all $y \in \C^{\N}$ such that
\[
\|y\|_{X'} = \sup_{x \in B_X} \sum_{n=1}^{\infty} \left|y(n)x(n)\right| < \infty
\]

Then $X'$ is a subspace of $X^*$. We have the following duality result:
\begin{theorem}\cite[Lemma 3.2 and Corollary 4.8]{KaltonKothe}\label{thm-duality}
Let $\mathcal{H} = \{X_w\}_{w \in \T}$ be a strongly admissible family. Then the family $\mathcal{H}' = \{Y_w\}_{w \in \T}$ where $Y_w = X_w'$ is admissible and $Y_z = X_z'$ isometrically.
\end{theorem}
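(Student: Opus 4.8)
The plan is to prove the duality theorem in two stages, matching the two citations invoked, $(i)$ admissibility of $\mathcal{H}'$ and $(ii)$ the isometric identification $Y_z = X_z'$. For the first stage I would verify the three defining conditions of an admissible family for $\{X_w'\}_{w \in \T}$. Condition (2) is immediate: if $\|xh\|_1 \le \|x\|_w \le \|xk\|_\infty$ for a.e.\ $w$, then dualizing pointwise gives $\|y k^{-1}\|_1 \le \|y\|_{X_w'} \le \|y h^{-1}\|_\infty$ (the K\"othe dual of the weighted $L_1$ is the weighted $L_\infty$ and vice versa), so $\mathcal{H}'$ satisfies (2) with the weights $h,k$ replaced by $k^{-1}, h^{-1}$. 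For the Borel measurability condition (1), the key observation is that, because each $X_w$ contains the countable-dimensional dense set coming from strong admissibility (or, more simply, because $\chi_{\{n\}} \in X_w$), the supremum defining $\|y\|_{X_w'}$ can be taken over a countable set of $x$'s with rational coordinates and finite support; a countable supremum of Borel functions of $w$ is Borel. This is essentially \cite[Lemma 3.2]{KaltonKothe}, which I would cite rather than reprove in full.

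For the second and main stage, the identification $Y_z = X_z'$ isometrically, the natural approach is the Lozanovskii-type factorization description of $X_z$ given by \eqref{eq:def-xz}. Write $D(r,\theta)(n) = \exp\!\big(\tfrac{1}{2\pi}\int_{-\pi}^{\pi} P(r,\theta-t)\log \phi(n,e^{it})\,dt\big)$ for $\phi \in \E$, $\phi \ge 0$; this produces the generic element of $X_z$, and the analogous construction with $\psi \in \E(\mathcal{H}')$ produces the generic element of $Y_z$. The inequality $\|y\|_{Y_z} \ge \|y\|_{X_z'}$ follows from a pointwise H\"older-type estimate: if $|x(n)|$ is represented by $\phi$ and $|y(n)|$ by $\psi$, then by concavity of $\log$ and the arithmetic–geometric inequality applied under the Poisson integral, $\sum_n |x(n)y(n)| \le \exp\big(\tfrac{1}{2\pi}\int \log \|\phi(\cdot,e^{it})\|_{X_{e^{it}}}\|\psi(\cdot,e^{it})\|_{X_{e^{it}}'}\,dt\big) \le \|\phi\|_{\E}\|\psi\|_{\E(\mathcal{H}')}$, using that $\sum_n \phi(n,w)\psi(n,w) \le \|\phi(\cdot,w)\|_{X_w}\|\psi(\cdot,w)\|_{X_w'}$ for a.e.\ $w$ and that $\log$ of a product of Poisson integrals is the Poisson integral of the sum of the logs. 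Taking infima over representations gives one inequality. The reverse inequality $\|y\|_{Y_z} \le \|y\|_{X_z'}$ is the genuinely hard part: given $y$ with $\|y\|_{X_z'} \le 1$ one must manufacture a representing $\psi \in \E(\mathcal{H}')$ of norm close to $1$. The standard device is, for each $w$, to choose via a measurable selection an almost-norming $x_w \in B_{X_w}$ for the functional $y$ against $X_w$, set $\psi(n,w) = |y(n)|/\phi(n,w)$ for a near-optimal $\phi$ representing (a suitable) $x$, and check that the outer function built from $\psi$ lies in $\mathcal{N}^+$; controlling integrability of $\log \psi$ and performing the measurable selection uniformly in $w$ is where the superreflexivity/strong admissibility hypotheses and the machinery from the proof of \cite[Theorem 3.3]{KaltonKothe} enter.

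I expect the measurable selection and the $\mathcal{N}^+$-integrability bookkeeping in the reverse inequality to be the main obstacle; everything else is either a pointwise duality computation or a citation. In the writeup I would keep stage $(i)$ brief (citing \cite[Lemma 3.2]{KaltonKothe}) and devote the bulk of the argument to the factorization/H\"older estimate in stage $(ii)$, then invoke \cite[Corollary 4.8]{KaltonKothe} for the delicate reverse inequality rather than reconstructing Kalton's selection argument from scratch, since it is exactly the content of that corollary.
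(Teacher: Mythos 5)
The paper does not actually prove this statement---it is imported verbatim from Kalton (Lemma 3.2 and Corollary 4.8 of \cite{KaltonKothe})---so there is no internal argument to compare against, and your outline is consistent with that citation. The routine parts you fill in are correct (the dual weight estimate $\|yk^{-1}\|_1 \leq \|y\|_{X_w'} \leq \|yh^{-1}\|_{\infty}$, the countable-supremum measurability argument for condition (1), and the H\"older/geometric-mean estimate giving $\|y\|_{X_z'} \leq \|y\|_{Y_z}$, which parallels the computation in Proposition \ref{props:concave}), and delegating the genuinely hard reverse inequality to Kalton's Corollary 4.8 is exactly what the paper itself does.
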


\section{An abstract counterexample}\label{sec:preliminary}

In this section we show that Kalton's theorem \ref{thm:Kalton} is optimal, in the sense that two spaces are not enough to induce a given centralizer.

If $X$ is a K\"othe function space and $\Omega$ is a centralizer on $X$ then there are real centralizers $\Omega_1, \Omega_2$ on $X$ such that $\Omega \sim \Omega_1 + i \Omega_2$ \cite[Lemma 7.1]{KaltonKothe}. The following result characterizes when $\Omega_1 + i \Omega_2$ is a real centralizer. If $f : \N \rightarrow \C$, we let $M_f : \C^{\N} \rightarrow \C^{\N}$ be given by $M_f(x)(n) = f(n)x(n)$.

\begin{lemma}\label{lem:realcent}
Let $\Omega \sim \Omega_1 + i\Omega_2$ be a centralizer on $X$ with $\Omega_1$, $\Omega_2$ real centralizers. Then $\Omega$ is projectively equivalent to a real centralizer on $X$ if and only if either
\begin{enumerate}
    \item $\Omega_1$ or $\Omega_2$ is trivial;
    \item $\Omega_1$ and $\Omega_2$ are projectively equivalent.
\end{enumerate}
\end{lemma}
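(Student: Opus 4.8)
The plan is to prove each implication separately, working with the equivalence relation "projectively equivalent" throughout and reducing everything to statements about real centralizers. First I would establish the easy direction: if $\Omega_1$ (say) is trivial, then $\Omega \sim \Omega_1 + i\Omega_2$ is equivalent to $i\Omega_2$, which is projectively equivalent to the real centralizer $\Omega_2$ (multiply by $\lambda = -i$); similarly if $\Omega_2$ is trivial then $\Omega$ is already equivalent to the real centralizer $\Omega_1$. If instead $\Omega_1$ and $\Omega_2$ are projectively equivalent, so $\Omega_1$ is equivalent to $\mu \Omega_2$ for some nonzero $\mu \in \C$, then $\Omega \sim \Omega_1 + i\Omega_2$ is equivalent to $(\mu + i)\Omega_2$ up to a bounded and linear error, hence projectively equivalent to the real centralizer $\Omega_2$ (note $\mu + i \neq 0$ is automatic unless $\mu = -i$, but if $\mu = -i$ then $\Omega_1 + i\Omega_2$ is equivalent to $0$, i.e. $\Omega$ is trivial, which is a real centralizer). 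This handles the "if" direction.

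For the converse, suppose $\Omega$ is projectively equivalent to a real centralizer $\Psi$, so there is a nonzero $\lambda \in \C$ and a linear map $L$ with $\Omega - \lambda\Psi - L : X \to X$ bounded. Writing $\lambda = a + ib$ and combining with $\Omega \sim \Omega_1 + i\Omega_2$, I get that $\Omega_1 + i\Omega_2$ is equivalent to $a\Psi + ib\Psi$, i.e. there is a linear map $L'$ with $(\Omega_1 - a\Psi) + i(\Omega_2 - b\Psi) - L'$ mapping $X$ to $X$ boundedly. The key idea is to take real and imaginary parts: for a real input $f$, the centralizers $\Omega_1(f), \Omega_2(f), \Psi(f)$ are all real, so the real and imaginary parts of $L'(f)$ are (up to bounded errors on real inputs) $\Omega_1(f) - a\Psi(f)$ and $\Omega_2(f) - b\Psi(f)$ respectively. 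Decomposing $L' = L_1 + iL_2$ with $L_1, L_2$ real-linear on the real part of $X$ and extending, I expect to conclude $\Omega_1$ is equivalent to $a\Psi$ and $\Omega_2$ is equivalent to $b\Psi$ (the subtlety being that this only follows for real inputs directly, but a centralizer is determined up to equivalence by its action on real, even nonnegative, inputs — using homogeneity and the centralizer estimate to pass from $|f|$ to $f$). Now if $a \neq 0$ and $b \neq 0$, then $\Omega_1$ is equivalent to $a\Psi$ and $\Omega_2$ to $b\Psi$, so $\Omega_1$ is equivalent to $\tfrac{a}{b}\Omega_2$, giving (2); if $a = 0$ then $\Omega_1$ is equivalent to $0$, i.e. trivial, giving (1); and likewise if $b = 0$ then $\Omega_2$ is trivial, giving (1).

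The main obstacle I anticipate is the step in the converse where I split the complex-linear (or merely real-linear) map $L'$ and pass from an estimate valid only on real — or nonnegative — elements of $X$ to a genuine equivalence of centralizers on all of $X$. One must be careful that "real part of $L'$" need not itself be $\C^\N$-linear in the way required by the definition of equivalence, so the argument should be organized around the observation that two centralizers are equivalent iff their difference is bounded on the positive cone after subtracting a linear map, together with the standard fact (from Kalton's theory, e.g. the reasoning behind \cite[Lemma 7.1]{KaltonKothe}) that a real centralizer's equivalence class is detected on real functions. Once that bookkeeping is set up correctly, splitting into real and imaginary parts is routine and the four cases $(a,b)$ nonzero/zero fall out immediately.
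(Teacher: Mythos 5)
Your proposal is correct and follows essentially the same route as the paper: decompose the equivalence $\Omega_1+i\Omega_2\sim(\alpha+i\beta)\Psi$ into real and imaginary parts on real inputs, then run the case analysis on $\alpha=0$, $\beta=0$, or both nonzero to get triviality or projective equivalence of $\Omega_1,\Omega_2$, with the converse direction being routine. The only difference is bookkeeping: the paper takes the linear error term to be a diagonal map $M_{f_1}+iM_{f_2}$ with $f_1,f_2$ real from the outset, which silently absorbs the splitting issue you (rightly) flag as the delicate point, whereas you argue via $\Omega_1\sim\alpha\Psi$, $\Omega_2\sim\beta\Psi$ separately instead of substituting $\Psi$ from one relation into the other.
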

\begin{proof}
Let $\Psi$ be a real centralizer on $X$, and suppose that $\Omega \sim (\alpha + i\beta) \Psi$, with $\alpha, \beta \in \mathbb{R}$. That is, there are $f_1, f_2 : \N \rightarrow \mathbb{R}$ such that
\[
\Omega_1 + i\Omega_2 - (\alpha + i\beta)\Psi - M_{f_1} -iM_{f_2}
\]
is bounded (and complex homogeneous).

This happens if and only if $\Omega_1 - \alpha \Psi - M_{f_1} = B_1$
and $\Omega_2 - \beta \Psi - M_{f_2} = B_2$ are bounded. Now, if $\alpha = 0$ then $\Omega_1$ is trivial. Otherwise $\Psi = \frac{1}{\alpha}(\Omega_1 - M_{f_1} - B_1)$, and substituting we get
    \[
    \Omega_2 - \frac{\beta}{\alpha} \Omega_1 = B_2 + M_{f_2} - \frac{\beta}{\alpha}(M_{f_1} + B_1)
    \]
that is, $\Omega_2$ is equivalent to $\frac{\beta}{\alpha}\Omega_1$. If $\beta = 0$ then $\Omega_2$ is trivial. Otherwise $\Omega_1$ and $\Omega_2$ are projectively equivalent. The sufficiency is easy.
\end{proof}

F\'elix Cabello S\'anchez presented the following result to us:
\begin{theorem}
There is a centralizer which is projectively equivalent to a centralizer induced by complex interpolation of a family of three K\"othe function spaces but non projectively equivalent to one induced by complex interpolation of two K\"othe function spaces.
\end{theorem}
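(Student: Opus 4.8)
The plan is to produce the required centralizer explicitly as $\Omega = \Omega_1 + i\Omega_2$ on a superreflexive K\"othe sequence space $X$, where $\Omega_1$ and $\Omega_2$ are real centralizers on $X$, each nontrivial, and not projectively equivalent to one another. Granting such a pair, both halves of the theorem follow by formal arguments. Since $\Omega$ is a complex combination of centralizers it is itself a centralizer, so Theorem \ref{thm:Kalton}(2), applied to $\Omega$ on the superreflexive space $X$, yields a family of three K\"othe function spaces and a point ($z=0$) at which the induced centralizer is boundedly, and in particular projectively, equivalent to $\Omega$. For the other half, the two alternatives of Lemma \ref{lem:realcent} both fail for $\Omega = \Omega_1 + i\Omega_2$, so $\Omega$ is not projectively equivalent to any real centralizer on $X$; hence if $\Omega$ were projectively equivalent to a centralizer $\Psi$ induced by complex interpolation of a family of two K\"othe function spaces, then by Theorem \ref{thm:Kalton}(1) $\Psi$ would be boundedly projectively equivalent to some real centralizer $\Theta$, and chaining the two equivalences (if $\Omega - \lambda\Psi - L$ and $\Psi - \mu\Theta - L'$ are bounded with $\lambda,\mu \neq 0$ and $L,L'$ linear, then $\Omega - \lambda\mu\Theta - (L + \lambda L')$ is bounded and $\lambda\mu \neq 0$) would make $\Omega$ projectively equivalent to the real centralizer $\Theta$, a contradiction.

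Everything therefore reduces to constructing $\Omega_1$ and $\Omega_2$. I would take $X = \ell_2 \oplus_2 \ell_2$, which is isometric to $\ell_2$, hence superreflexive, and which, after relabeling the coordinates by splitting $\N$ into two infinite blocks, is a K\"othe sequence space on $\N$ in the sense used here. Let $\omega$ be a nontrivial real centralizer on $\ell_2$, for concreteness the Kalton--Peck centralizer $\omega(x)(n) = x(n)\log(|x(n)|/\|x\|_2)$, and set $\Omega_1(x,y) = (\omega(x), 0)$ and $\Omega_2(x,y) = (0, \omega(y))$. Each $\Omega_j$ is a real centralizer on $X$, its commutator estimate following from that of $\omega$ on a single block together with the fact that $\|(x,y)\|_X$ dominates $\|x\|_2$ and $\|y\|_2$. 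On the subspace $\{(x,0) : x \in \ell_2\}$ the map $\Omega_2$ vanishes identically while $\Omega_1$ restricts to a copy of $\omega$; consequently the nontriviality of $\Omega_1$ (and symmetrically of $\Omega_2$), together with the failure of any relation $\Omega_1 \sim \lambda\Omega_2 + L$ with $\lambda \neq 0$ and $L$ linear, both reduce, upon restriction to this block, to the nontriviality of $\omega$ on $\ell_2$. Thus neither alternative of Lemma \ref{lem:realcent} holds for $\Omega = \Omega_1 + i\Omega_2$, which completes the construction.

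I expect the genuine difficulty to lie precisely in the construction of $\Omega_1$ and $\Omega_2$: one needs two real centralizers that are each far from linear yet far from each other, and the obvious first candidates, the derivations of scales of $\ell_p$ or $\ell_{p,q}$ spaces, are useless here, since the reiteration and stability results of \cite{Stability} force all such centralizers to be projectively equivalent. The direct-sum device gets around this by letting $\Omega_1$ and $\Omega_2$ act on complementary coordinate blocks, so that evaluating on one block isolates a single copy of the nontrivial centralizer $\omega$ and kills the other summand. Everything else is bookkeeping with the definitions of equivalence, bounded equivalence, and projective equivalence of centralizers, together with the superreflexivity of $\ell_2$ that permits the use of Theorem \ref{thm:Kalton}(2).
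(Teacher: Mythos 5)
Your proposal is correct and follows essentially the same route as the paper: form $\Omega = \Omega_1 + i\Omega_2$ from two nontrivial, non-projectively-equivalent real centralizers, get the three-space family from Theorem \ref{thm:Kalton}(2), and rule out two-space families via Theorem \ref{thm:Kalton}(1) together with Lemma \ref{lem:realcent}. The only difference is that the paper simply asserts the existence of such a pair on $\ell_2$, whereas you construct one explicitly by letting a single Kalton--Peck map act on each of the two complementary coordinate blocks of $\ell_2 \oplus_2 \ell_2 \cong \ell_2$; your block-restriction argument (which kills one summand and isolates $\omega$) correctly reduces both nontriviality and non-projective-equivalence to the known nontriviality of $\omega$, so the proposal is complete.
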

\begin{proof}
Let $\Omega_1$ and $\Omega_2$ be two real centralizers on $\ell_2$, both nontrivial, which are not projectively equivalent. By Kalton's Theorem \ref{thm:Kalton} there is a family of three K\"othe function spaces which induces a centralizer projectively equivalent to $\Omega_1 + i\Omega_2$, which by Lemma \ref{lem:realcent} is not projectively equivalent to a centralizer induced by complex interpolation of a family of two K\"othe function spaces.
\end{proof}


\section{Complex interpolation of Orlicz spaces}\label{sec:interOrlicz}

For general facts on Orlicz spaces we refer to \cite[Chapter 4]{ClassicalI} and \cite{RaoRen}. Recall that $\phi : [0, \infty) \rightarrow [0, \infty)$ is an \emph{Orlicz function} if it is convex, non-decreasing, $\phi(0) = 0$ and $\lim_{t \rightarrow \infty} \phi(t) = \infty$. The function $\phi$ is said \emph{non-degenerate} if $\phi(t) > 0$ for $t > 0$, and satisfies the \emph{$\Delta_2$-condition at 0} if $\limsup_{t \rightarrow 0} \frac{\phi(2t)}{\phi(t)} < \infty$. We deal only with non-degenerate Orlicz functions, unless otherwise stated.

Given an Orlicz function $\phi$ we have the Orlicz space
\[
\ell_{\phi} = \{x \in \C^{\N} : \sum \phi\Big(\frac{\left|x(n)\right|}{\rho}\Big) < \infty \text{ for some } \rho > 0\}
\]
endowed with the complete norm
\[
\|x\|_{\phi} = \inf\{\rho > 0 : \sum \phi\Big(\frac{\left|x(n)\right|}{\rho}\Big) \leq 1\}
\]

The $\Delta_2$-condition at 0 is equivalent to the separability of $\ell_{\phi}$. Of course, the most famous examples of Orlicz spaces are the $\ell_p$ spaces.

An \emph{N-function} is a nondegenerate Orlicz function $\phi$ such that $\lim_{t \rightarrow 0} \frac{\phi(t)}{t} = 0$ and $\lim_{t \rightarrow \infty} \frac{\phi(t)}{t} = \infty$. For a nondegenerate Orlicz function $\phi$ which is not an $N$-function $\ell_{\phi} \simeq \ell_1$. If $\phi$ is an N-function satisfying the $\Delta_2$-condition at 0 then there is an N-function $\phi^*$ such that $\ell_{\phi}^* \simeq \ell_{\phi^*}$, where the action of an element $x^* \in \ell_{\phi^*}$ on $x \in \ell_{\phi}$ is given by $\sum_{n = 1}^{\infty} x^*(n) x(n)$. In fact,
\begin{equation}\label{eq:equivalencedual}
\|x^*\|_{\ell_{\phi^*}} \leq \|x^*\|_{\ell_{\phi}^*} \leq 2\|x^*\|_{\ell_{\phi^*}}
\end{equation}

The function $\phi^*$ is given by $\phi^*(y) = \sup\{xy - \phi(x) : 0 < x < \infty\}$. If $\phi$ satisfies the $\Delta_2$-condition at 0 then $\ell_{\phi}^* = \ell_{\phi}'$.

It is clear that Orlicz spaces are K\"othe function spaces. If $\phi, \psi$ are Orlicz functions, then $\ell_{\phi} = \ell_{\psi}$ with equivalence of norms if and only if there are $t_0, k, K > 0$ such that
\[
\frac{1}{K} \phi\Big(\frac{t}{k}\Big) \leq \psi(t) \leq K \phi(kt)
\]
for every $t \in [0, t_0]$. If $\phi$ or $\psi$ satisfies the $\Delta_2$-condition at zero then we can take $k = 1$.

\begin{definition}\label{def:admissibleOrliczfamily}
A family $\{\phi_w\}_{w \in \T}$ of Orlicz functions will be called \emph{admissible} if
\begin{enumerate}
    \item For every $t \in [0, \infty)$ the function $w \mapsto \phi_w(t)$ is Borel;
    \item There is $t_0 > 0$ such that for every $t \in (0, t_0)$ the function $w \mapsto \log \phi_w^{-1}(t)$ is integrable.
    \item There are $h, k \in \ell_0$ strictly positive such that for every $x \in \ell_0$ and a.\ e.\ $w \in \T$ we have $\|xh\|_1 \leq \|x\|_{\ell_{\phi_w}} \leq \|xk\|_{\infty}$.
\end{enumerate}

If, furthermore, the functions $\phi_w$ satisfy the $\Delta_2$-condition at 0, the family will be called \emph{strongly admissible}.
\end{definition}

\begin{props}\label{props:afamilyisadmissible}
Let $\{\phi_w\}_{w \in \T}$ be a (strongly) admissible family of Orlicz functions. Then $\mathcal{H} = \{X_{\ell_w}\}_{w \in \T}$ is a (strongly) admissible family.
\end{props}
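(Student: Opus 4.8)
The plan is to check that $\mathcal{H} = \{\ell_{\phi_w}\}_{w \in \T}$ satisfies the two (respectively three) conditions in the definition of (strongly) admissible family of K\"othe function spaces; that each $\ell_{\phi_w}$ is itself a K\"othe function space has already been observed. One condition is free: the requirement that there be strictly positive $h,k$ with $\|xh\|_1 \le \|x\|_{\ell_{\phi_w}} \le \|xk\|_\infty$ for a.e.\ $w$ is verbatim condition (3) of Definition \ref{def:admissibleOrliczfamily} (with $\ell_0 = \C^\N$). So the real content is the Borel measurability of $(w,x) \mapsto \|x\|_{\ell_{\phi_w}}$, and, in the strongly admissible case, the construction of a countable-dimensional subspace of $\C^\N$ whose intersection with $B_{\ell_{\phi_w}}$ is $\C^\N$-dense in $B_{\ell_{\phi_w}}$ for a.e.\ $w$.

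For the measurability I would first upgrade condition (1) of Definition \ref{def:admissibleOrliczfamily} to joint measurability. Each $\phi_w$ is convex with $\phi_w(0)=0$, hence continuous on $[0,\infty)$ (convexity gives continuity on $(0,\infty)$, and $\phi_w(s) \le (s/t)\phi_w(t) \to 0$ as $s \to 0^+$). Thus $(w,t) \mapsto \phi_w(t)$ is Borel in $w$ for each $t$ and continuous in $t$ for each $w$; approximating $t$ from below by dyadic rationals, $(w,t) \mapsto \phi_w(\lfloor 2^m t\rfloor/2^m)$ is a countable sum of products of a Borel function of $w$ with a Borel function of $t$, hence jointly Borel on $\T \times [0,\infty)$, and letting $m \to \infty$ and using continuity shows $(w,t)\mapsto \phi_w(t)$ is jointly Borel. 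Composing with the continuous maps $(w,x) \mapsto |x(n)|/\rho$ and summing over $n$, the function $\Psi_\rho(w,x) := \sum_{n} \phi_w(|x(n)|/\rho)$ is jointly Borel for each fixed $\rho > 0$. Since $\rho \mapsto \Psi_\rho(w,x)$ is non-increasing, the set $\{\rho > 0 : \Psi_\rho(w,x) \le 1\}$ is either empty (exactly when $x \notin \ell_{\phi_w}$, consistently with the convention $\|x\|_{\ell_{\phi_w}} = \infty$) or an interval of the form $[\|x\|_{\ell_{\phi_w}},\infty)$ or $(\|x\|_{\ell_{\phi_w}},\infty)$. Therefore, for every $c > 0$,
\[
\{(w,x) : \|x\|_{\ell_{\phi_w}} < c\} = \bigcup_{\rho \in \Q,\, 0 < \rho < c} \{(w,x) : \Psi_\rho(w,x) \le 1\},
\]
which is Borel; hence $(w,x) \mapsto \|x\|_{\ell_{\phi_w}}$ is Borel.

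For the strongly admissible case I would take $V$ to be the space of finitely supported sequences, which has countable dimension. Given any $w$ and any $x$ with $\|x\|_{\ell_{\phi_w}} \le 1$, the truncations $x\chi_{\{1,\dots,N\}}$ lie in $V$, satisfy $\|x\chi_{\{1,\dots,N\}}\|_{\ell_{\phi_w}} \le \|x\|_{\ell_{\phi_w}} \le 1$ by monotonicity of the norm, and converge to $x$ coordinatewise, hence in the product topology; so $V \cap B_{\ell_{\phi_w}}$ is $\C^\N$-dense in $B_{\ell_{\phi_w}}$ for every $w$. (The $\Delta_2$-condition is not actually needed for this density step; its role lies in later developments such as duality and the formula for the derivation.) The one genuinely delicate point in the whole argument is the joint Borel measurability — specifically the passage from ``Borel in $w$ for each $t$'' to ``jointly Borel'', together with the bookkeeping that makes the infimum defining the Orlicz norm measurable (monotonicity in $\rho$ and the value $+\infty$ off the space); everything else is routine.
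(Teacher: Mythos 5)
Your proof is correct and follows essentially the same route as the paper: Borel measurability of $(w,x)\mapsto\|x\|_{\ell_{\phi_w}}$ via dyadic approximation of the argument of $\phi_w$ plus a scan over rational values of $\rho$, and $V=c_{00}$ with coordinatewise-convergent truncations for the strong admissibility condition (you package the dyadic step as joint measurability of $(w,t)\mapsto\phi_w(t)$ before composing, while the paper applies it directly to $(w,x)\mapsto\phi_w(|x(k)|/\lambda)$, but this is the same idea). Your side remark that the $\Delta_2$-condition is not needed for the density step is also accurate; the paper's appeal to $x\in c_0$ there is likewise superfluous, since truncations converge coordinatewise for any $x\in\C^{\N}$.
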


\begin{proof}
We must show that the map $T : (w, x) \mapsto \|x\|_{\ell_{\phi_w}}$ is Borel.

Let $\psi_{\lambda} : \T \times \C^{\N} \rightarrow [0, \infty]$ be given by $\psi_{\lambda}(w, x) = \sum_{n=1}^{\infty} \phi_w\Big(\frac{\left|x(n)\right|}{\lambda}\Big)$ and $a \in [0, \infty]$. We want to check that $T^{-1}[0, a)$ is a Borel set. Let $(\lambda_n)$ be an enumeration of $\Q \cap [0, a)$. We have
\begin{eqnarray*}
(w, x) \in T^{-1}[0, a) & \iff & \inf\{\rho : \sum_{n=1}^{\infty} \phi_w\Big(\frac{\left|x(n)\right|}{\rho}\Big) \leq 1\} < a\\
    & \iff & \exists k : \sum_{n=1}^{\infty} \phi_w\Big(\frac{\left|x(n)\right|}{\lambda_k}\Big) \leq 1\\
    & \iff & (w, x) \in \bigcup\limits_{k=1}^{\infty} \psi_{\lambda_k}^{-1}[0, 1]
\end{eqnarray*}
so we just need to check that $\psi_{\lambda_k}^{-1}[0, 1]$ is a Borel set for every $k$.

Fix $\lambda$ and let $A_{k, n, m} = \{x \in \C^{\N} : \frac{\left|x(k)\right|}{\lambda} \in [\frac{n}{2^m}, \frac{n+1}{2^{m}})\}$.  This is a Borel set, since the function $x \mapsto \frac{\left|x(k)\right|}{\lambda}$ is continuous. Let $\xi_{k, n} : \T \times \C^{\N} \rightarrow [0, \infty]$ be given by
\[
\xi_{k, m}(w, x) = \sum_{n=0}^\infty \chi_{A_{k, n, m}}(x) \phi_w\Big(\frac{n}{2^m}\Big)
\]

The functions $(w, x) \mapsto \chi_{A_{k, n, m}}(x)$ and $(w, x) \mapsto \phi_w\Big(\frac{n}{2^m}\Big)$ are Borel, so that $\xi_{k, n}$ is Borel. But $\xi_{k, m}(w, x)$ converges in $m$ to $\phi_w\Big(\frac{\left|x(k)\right|}{\lambda}\Big)$, so that this function is Borel.

This implies that the function $\psi_{\lambda_k, N}(w, x) = \sum_{n=1}^N \phi_w\Big(\frac{\left|x(n)\right|}{\lambda_k}\Big)$ is Borel, and therefore so is $\psi_{\lambda_k}$, proving that $(w, x) \mapsto \|x\|_w$ is a Borel function.

For strong admissibility, we may take $V = c_{00}$. Indeed, take $x \in B_{X_w}$. Then the $N$-truncations $x^N \in V \cap B_{X_w}$ and converge to $x$ in $\C^{\N}$, since $x \in c_0$.
\end{proof}

\begin{props}
Condition (2) of Definition \ref{def:admissibleOrliczfamily} is equivalent to the existence of Orlicz functions $\phi, \psi$ such that $\phi \leq \phi_w \leq \psi$ on a neighborhood of $0$ for a.\ e.\ $w \in \T$. Here we allow $\phi$ to be degenerate.
\end{props}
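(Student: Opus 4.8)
The plan is to prove both implications by translating Condition (2) --- integrability of $w \mapsto \log \phi_w^{-1}(t)$ for small $t$ --- into pointwise bounds on the inverse functions $\phi_w^{-1}$, and then invert. For the direction ``$(2) \Rightarrow$ bounds'', I would start from the integrability hypothesis: fix some $t_1 \in (0, t_0)$. Since $w \mapsto \log \phi_w^{-1}(t_1)$ is integrable, in particular it is a.e.\ finite, so $\phi_w^{-1}(t_1) \in (0, \infty)$ for a.e.\ $w$; but this alone gives nothing uniform. The key observation is that $t \mapsto \phi_w^{-1}(t)$ is concave and nondecreasing (as the inverse of a convex nondecreasing function vanishing at $0$), so for $0 < s \leq t \leq t_1$ we have $\frac{\phi_w^{-1}(s)}{s} \geq \frac{\phi_w^{-1}(t)}{t}$ and $\phi_w^{-1}(s) \leq \phi_w^{-1}(t)$. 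Integrability of the single function $\log \phi_w^{-1}(t_1)$ does not by itself produce an $L^\infty$-type two-sided bound, so the right move is to extract, from integrability, a \emph{fixed} function of $t$ bounding $\phi_w^{-1}(t)$ from above and below for a.e.\ $w$; I expect this is where the real work lies (see below). Granting such functions $g_1 \leq \phi_w^{-1}(t) \leq g_2$ on $(0,t_1)$ for a.e.\ $w$, with $g_1, g_2$ nondecreasing and $g_1(t) \to 0$, I would then set $\psi = g_1^{-1}$ (suitably interpreted, allowing degeneracy) and $\phi = g_2^{-1}$ and check these are Orlicz functions (convexity of $g_i^{-1}$ follows from concavity of $g_i$), obtaining $\phi \leq \phi_w \leq \psi$ near $0$ by applying these inverses to the chain $g_1 \leq \phi_w^{-1} \leq g_2$.

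For the converse, ``bounds $\Rightarrow (2)$'', the argument is cleaner. If $\phi \leq \phi_w \leq \psi$ on $(0,\delta)$ for a.e.\ $w$, then applying inverses (which reverses the inequalities, using that $\phi, \psi, \phi_w$ are nondecreasing) gives $\psi^{-1}(t) \leq \phi_w^{-1}(t) \leq \phi^{-1}(t)$ for $t$ in a neighborhood of $0$; here $\psi^{-1}, \phi^{-1}$ are genuine finite positive numbers for small $t > 0$ (with the convention that if $\phi$ is degenerate, $\phi^{-1}(t)$ is still finite and positive for $t$ small since $\phi$ is eventually positive and $\to \infty$). Taking logarithms, $\log \psi^{-1}(t) \leq \log \phi_w^{-1}(t) \leq \log \phi^{-1}(t)$, so $w \mapsto \log\phi_w^{-1}(t)$ is sandwiched between two constants (for each fixed small $t$), hence bounded, hence integrable on the probability space $(\T, \lambda)$; Borel measurability in $w$ follows from Condition (1) of Definition \ref{def:admissibleOrliczfamily}, since $\phi_w^{-1}(t) = \sup\{s \geq 0 : \phi_w(s) \leq t\}$ is a countable sup of Borel functions of $w$.

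The main obstacle is the extraction step in the forward direction: deducing a single dominating pair $g_1, g_2$ of functions of $t$ from the mere integrability (not boundedness) of $w \mapsto \log\phi_w^{-1}(t)$ for each small $t$. The natural approach is to use the concavity/monotonicity structure in $t$ to reduce to countably many values of $t$: pick a sequence $t_j \downarrow 0$, note that on the full-measure set where $\log\phi_w^{-1}(t_j)$ is finite for all $j$ one has, by concavity, control of $\phi_w^{-1}$ at all intermediate $t$ by its values at the $t_j$; but to get a bound \emph{uniform in $w$} one still needs more than a.e.\ finiteness. I would handle the upper bound $g_2$ via $\phi_w^{-1}(t) \leq \frac{t}{t_1}\phi_w^{-1}(t_1) \cdot (\text{const})$... no --- better: use that integrability forces $\phi_w^{-1}(t_1) < \infty$ a.e., and the key point is actually the \emph{lower} bound must go to $0$, which follows because $\phi_w^{-1}$ is an Orlicz-type function so $\phi_w^{-1}(0^+) = 0$ automatically. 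Concretely, I expect the statement should be read as: Condition (2) is equivalent to having such a pair, and the proof in the forward direction proceeds by defining $g_2(t) = \esssup_w \phi_w^{-1}(t)$ and $g_1(t) = \essinf_w \phi_w^{-1}(t)$, arguing these are finite and positive respectively for small $t$ (finiteness of $g_2$ is exactly what needs care --- this may require the hypothesis to be strengthened to integrability with a uniform bound, or an application of Fatou/Egorov to pass from a.e.\ finiteness to local boundedness using monotonicity in $t$), checking $g_1, g_2$ are monotone with the right limits, and then setting $\phi, \psi$ to be their (left-continuous, convexified) inverses.
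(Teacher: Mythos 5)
The obstacle you ran into in the forward direction is not a technical nuisance to be finessed: as literally stated, with Condition (2) of Definition \ref{def:admissibleOrliczfamily}, the implication you were trying to prove is false, so no extraction of $\esssup_w \phi_w^{-1}(t)$ or $\essinf_w \phi_w^{-1}(t)$ can work. Take for instance $\phi_w(t)=|1-w|^{-1}t$ (and $\phi_1(t)=t$). Then $\log\phi_w^{-1}(t)=\log t+\log|1-w|$ is integrable in $w$ for every $t>0$, so (1) and (2) hold; but if $\psi$ were an Orlicz function with $\phi_w\leq\psi$ on some neighborhood of $0$ for a.e.\ $w$, then from $\psi(t)\leq\frac{\psi(\delta)}{\delta}t$ on $[0,\delta]$ (convexity, $\psi(0)=0$) one would get $|1-w|^{-1}\leq\frac{\psi(\delta)}{\delta}$ for a.e.\ $w$, which is absurd. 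So your suspicion that ``the hypothesis may need to be strengthened'' is exactly right: integrability of the logarithms gives no essential boundedness, and the statement with (2) cannot be proved.

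What the paper's own proof establishes is the equivalence of the sandwich condition with Condition (3) of Definition \ref{def:admissibleOrliczfamily} (the existence of strictly positive $h,k$ with $\|xh\|_1\leq\|x\|_{\ell_{\phi_w}}\leq\|xk\|_\infty$ a.e.); the ``(2)'' in the statement is a reference slip. Under that reading the direction you could not complete is immediate: testing the norm inequalities on $x=e_1$ gives $h(1)\leq\|e_1\|_{\ell_{\phi_w}}=1/\phi_w^{-1}(1)\leq k(1)$ a.e., hence $k(1)^{-1}\leq\phi_w^{-1}(1)\leq h(1)^{-1}$, and one takes $\phi$ degenerate, vanishing on $[0,h(1)^{-1}]$, and $\psi(t)=k(1)t$, since convexity and $\phi_w(k(1)^{-1})\leq 1$ give $\phi_w(t)\leq k(1)t$ on $[0,k(1)^{-1}]$. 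Conversely, the paper deduces (3) from the sandwich: the inclusions $\ell_\psi\subset\ell_{\phi_w}\subset\ell_\phi$ hold with uniform norm bounds, $h$ is obtained from a strictly positive element of $\ell_\phi'$, and $k$ is chosen strictly positive with $\sum_n\psi(k(n)^{-1})\leq 1$ (and $\phi_w(k(n)^{-1})\leq\psi(k(n)^{-1})$ a.e.). Your argument that the sandwich implies (2) --- via $\psi^{-1}(t)\leq\phi_w^{-1}(t)\leq\phi^{-1}(t)$ and boundedness of the logarithms --- is correct provided the sandwich is read as holding on a neighborhood of $0$ common to a.e.\ $w$, but it proves a weaker statement than the equivalence the paper's proof actually addresses, and it is not the implication needed for the rest of the paper, where the proposition serves precisely to verify the norm-envelope condition (3).
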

\begin{proof}
Suppose $\phi$ and $\psi$ are as above. Then for a.\ e.\ $w \in \T$ we have continuous inclusions $\ell_{\psi} \subset \ell_{\phi_w} \subset \ell_{\phi}$, with uniform bound on their norms.

Let $y \in \ell_{\phi}'$ be strictly positive of norm at most 1. Then there is $a > 0$ such that $y \in \ell_{\phi_w}'$ with norm at most $a$ for a.\ e.\ $w \in \T$. Take $\frac{y}{a}$ as $h$.

Now take $k$ strictly positive such that $\sum \psi(k(n)^{-1}) \leq 1$ with $k(n)$ big enough so that $\phi_w(k(n)^{-1}) \leq \psi(k(n)^{-1})$ for every $n$ and a.\ e.\ $w \in \T$. Then for a.\ e.\ $w \in \T$ and every $x \in \C^{\N}$ we have
\begin{eqnarray*}
\sum \phi_w\Big(\frac{\left|x(n)\right|}{\|xk\|_{\infty}}\Big) & \leq & \sum \phi_w(k(n)^{-1}) \\
            & \leq & \sum \psi(k(n)^{-1}) \\
            & \leq & 1
\end{eqnarray*}

For the converse, suppose (2). Taking $x = e_1$ we have for a.\ e.\ $w \in \T$
\[
h(1) \leq \inf\{\rho > 0 : \phi_w\Big(\frac{1}{\rho}\Big) \leq 1\} = \frac{1}{\phi_w^{-1}(1)} \leq k(1)
\]
So $k(1)^{-1} \leq \phi_w^{-1}(1) \leq h(1)^{-1}$. Now we may take as $\phi$ a degenerate function which is 0 on $[0, h(1)^{-1}]$, and $\psi(t) = k(1)t$.
\end{proof}

Given an admissible family of Orlicz functions $\{\phi_w\}_{w \in \T}$ and $z = re^{i\theta} \in \D$ we denote
\[
I_z(\{\phi_w\})(t) = \exp\Big(\frac{1}{2\pi} \int_{-\pi}^{\pi} P(r, \theta - t) \log \phi_{e^{it}}^{-1}(t) dt\Big)
\]

\begin{props}\label{props:concave}
Let $\{\phi_w\}_{w \in \T}$ be an admissible family of Orlicz functions. Then for every $z \in \D$
\begin{enumerate}
    \item $I_z(\{\phi_w\})(0) = 0$;
    \item If $t_0 > 0$ is such that $w \mapsto \log \phi_w^{-1}(t)$ is integrable for every $t \in (0, t_0)$, then $I_z(\{\phi_w\})$ is concave and strictly increasing on $[0, t)$ for some $t < t_0$.
\end{enumerate}
\end{props}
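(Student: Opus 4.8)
The plan is to recognize $I_z(\{\phi_w\})$ as a geometric mean of the inverse Orlicz functions. Write $z=re^{i\theta}$ and let $\mu_z$ be the probability measure on $\T$ with $d\mu_z(e^{it})=\frac{1}{2\pi}P(r,\theta-t)\,dt$; for a non-negative Borel function $u$ on $\T$ set $\mathcal{G}(u)=\exp\big(\int_\T\log u\,d\mu_z\big)$, with the convention $\exp(-\infty)=0$, so that, writing $u_s(w)=\phi_w^{-1}(s)$, we have $I_z(\{\phi_w\})(s)=\mathcal{G}(u_s)$. I will use repeatedly that each $\phi_w$, being convex, non-decreasing, non-degenerate and equal to $0$ at $0$, is a continuous strictly increasing bijection of $[0,\infty)$ onto itself, so that $\phi_w^{-1}$ is continuous, strictly increasing, concave, and $\phi_w^{-1}(0)=0$. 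Statement (1) is then immediate: $u_0\equiv 0$, so $\log u_0\equiv-\infty$, and since $P(r,\theta-t)>0$ for $r<1$ the Poisson integral defining $I_z(\{\phi_w\})(0)$ equals $-\infty$, whence $\mathcal{G}(u_0)=0$.

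For (2), fix $t_0$ as in the hypothesis and put $G:=I_z(\{\phi_w\})$; for $s\in(0,t_0)$ the assumed integrability of $w\mapsto\log\phi_w^{-1}(s)$ makes $\log G(s)=\int_\T\log u_s\,d\mu_z$ finite, so $G(s)\in(0,\infty)$. The key input is that on functions whose logarithm is $\mu_z$-integrable the functional $\mathcal{G}$ is positively homogeneous, multiplicative, monotone, and \emph{superadditive}: $\mathcal{G}(u+v)\ge\mathcal{G}(u)+\mathcal{G}(v)$. Superadditivity follows from Jensen's inequality for the concave logarithm, which gives $\mathcal{G}(\psi)\le\int_\T\psi\,d\mu_z$ for non-negative $\psi$: dividing by $\mathcal{G}(u+v)$ and using multiplicativity,
\[
\frac{\mathcal{G}(u)+\mathcal{G}(v)}{\mathcal{G}(u+v)}=\mathcal{G}\Big(\tfrac{u}{u+v}\Big)+\mathcal{G}\Big(\tfrac{v}{u+v}\Big)\le\int_\T\tfrac{u}{u+v}\,d\mu_z+\int_\T\tfrac{v}{u+v}\,d\mu_z=1 .
\]

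Granting this, concavity of each $\phi_w^{-1}$ gives, for $s_1,s_2\in(0,t_0)$ and $\lambda\in[0,1]$, the pointwise bound $u_{\lambda s_1+(1-\lambda)s_2}\ge\lambda u_{s_1}+(1-\lambda)u_{s_2}$, so monotonicity, superadditivity and homogeneity of $\mathcal{G}$ yield
\[
G(\lambda s_1+(1-\lambda)s_2)\ge\mathcal{G}(\lambda u_{s_1})+\mathcal{G}((1-\lambda)u_{s_2})=\lambda G(s_1)+(1-\lambda)G(s_2);
\]
thus $G$ is concave on $(0,t_0)$, and it is strictly increasing there because $s_1<s_2$ forces $\log u_{s_1}<\log u_{s_2}$ pointwise on $\T$, hence $\log G(s_1)<\log G(s_2)$ after integrating against the probability measure $\mu_z$. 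To include the endpoint $0$, note that by monotone convergence $\log G(s)=\int_\T\log u_s\,d\mu_z\to-\infty$ as $s\downarrow 0$ (the integrands decrease to $-\infty$ and are dominated above by $\log u_{s_\ast}$ for any fixed $s_\ast\in(0,t_0)$), so $G(s)\to 0=G(0)$; a concave function on $(0,t_0)$ with limit $0$ at $0$ extends concavely to $[0,t_0)$ with value $0$ at $0$. Hence $G$ is concave and strictly increasing on $[0,t_0)$, in particular on $[0,t)$ for every $t<t_0$.

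I expect the only non-routine step to be the superadditivity (equivalently, the concavity) of the functional $\mathcal{G}$; everything else — concavity of the inverse of a convex increasing function, Jensen's inequality, and the monotone-convergence computation at $0$ — is standard. The one point to watch is the bookkeeping of $\mu_z$-integrability of the logarithms that appear ($\log u_s$, $\log(u+v)$, $\log\frac{u}{u+v}$), so that the manipulations of $\mathcal{G}$ are legitimate; on $(0,t_0)$ this is exactly what the hypothesis provides, together with $\min(u,v)\le u+v\le 2\max(u,v)$.
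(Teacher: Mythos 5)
Your proof is correct, and its skeleton matches the paper's: both reduce the problem to concavity (superadditivity) of the Poisson-weighted geometric mean $\mathcal{G}(u)=\exp\bigl(\int_\T \log u\, d\mu_z\bigr)$ and then feed in the pointwise concavity of each $\phi_w^{-1}$, handling the endpoint $0$ separately. Where you diverge is in how that key property of $\mathcal{G}$ is established and in how strict monotonicity is obtained. The paper proves the concavity inequality by approximating $w\mapsto\phi_w^{-1}(t_i)$ with monotone sequences of simple functions, applying the discrete weighted AM--GM (H\"older-type) inequality $\prod(\alpha a_k)^{\mu(E_k)}+\prod((1-\alpha)b_k)^{\mu(E_k)}\le\prod(\alpha a_k+(1-\alpha)b_k)^{\mu(E_k)}$, and passing to the limit by monotone convergence; you instead prove superadditivity of $\mathcal{G}$ directly via Jensen's inequality for $\log$ together with the normalization trick $\mathcal{G}\bigl(\tfrac{u}{u+v}\bigr)+\mathcal{G}\bigl(\tfrac{v}{u+v}\bigr)\le 1$, which is cleaner and avoids the approximation step, at the cost of the integrability bookkeeping you correctly flag (and which the bounds $\max(u,v)\le u+v\le 2\max(u,v)$ settle). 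For the monotonicity, the paper argues indirectly that a nonnegative concave function failing to be strictly increasing near $0$ would have to vanish on an initial interval, contradicting the integrability hypothesis (yielding only ``strictly increasing on $[0,t)$ for some $t<t_0$''), whereas you get strict increase on all of $(0,t_0)$ directly from the strict monotonicity of each $\phi_w^{-1}$ and strict inequality of the integrals; your conclusion is therefore slightly stronger and still implies the stated one. Your treatment of the endpoint (monotone convergence to show $G(s)\to 0$ and the concave extension by $G(0)=0$) is a correct substitute for the paper's shorter remark, so no gaps remain.
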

\begin{proof}
\emph{(1)} Follows from $\phi_w(0) = 0$ for every $w \in \T$.

\emph{(2)} Let $t_0, t_1 \in [0, \infty)$ and $\alpha \in [0, 1]$. First, suppose that $\mu$ is a probability measure on $\T$ and let $f, g : \T \rightarrow (0, \infty)$ be simple functions, let us write $f(w) = \sum a_k \chi_{E_k}$ and $g = \sum b_k \chi_{E_k}$. Then for $\alpha \in [0, 1]$
\begin{eqnarray*}
&& \alpha \exp\Big(\int_{\T} \log f(w) dw\Big) + (1 - \alpha) \exp\Big(\int_{\T} \log g(w) dw\Big)\\
    & = & \alpha \prod a_k^{\mu(E_k)} + (1 - \alpha)  \prod b_k^{\mu(E_k)} \\
    & = & \prod (\alpha a_k)^{\mu(E_k)} + \prod ((1- \alpha) b_k)^{\mu(E_k)} \\
    & \leq & \prod_k [\alpha a_k + (1 - \alpha) b_k]^{\mu(E_k)} \\
    & = & \exp\Big(\log \prod_k [\alpha a_k + (1 - \alpha) b_k]^{\mu(E_k)}\Big) \\
    & = & \exp\Big(\int_{\T} \log[\alpha f(w) + (1 - \alpha) g(w)] dw\Big)
\end{eqnarray*}

Now let $f_n$ (respectively, $g_n$) be a monotone sequence of simple positive functions such that $f_n(w) \rightarrow \phi_w^{-1}(t_0)$ (respectively, $g_n(w) \rightarrow \phi_w^{-1}(t_1)$) for a.\ e.\ $w \in \T$. Then monotone convergence gives 
\begin{eqnarray*}
&& (I_z(\{\phi_w\})(\alpha t_0 + (1 - \alpha) t_1))  \\
& = & \exp\Big(\frac{1}{2\pi} \int_{0}^{2\pi} \log \phi_w^{-1}(\alpha t_0 + (1 - \alpha) t_1) P_z(w) dw\Big) \\
        & \geq & \exp\Big(\frac{1}{2\pi} \int_{0}^{2\pi} \log (\alpha \phi_w^{-1}(t_0) + (1 - \alpha)\phi_w^{-1}(t_1)) P_z(w) dw\Big) \\
        & = & \lim_n \exp\Big(\frac{1}{2\pi} \int_{0}^{2\pi} \log (\alpha f_n(w) + (1 - \alpha)g_n(w)) P_z(w) dw\Big) \\
        & \geq & \lim_n \alpha \exp\Big(\int_0^{2\pi} \log f(w) P_z(w) dw\Big) + (1 - \alpha) \exp\Big(\int_0^{2\pi} \log g(w) P_z(w) dw\Big) \\
        & = & \alpha \exp\Big(\int_0^{2\pi} \log \phi_w^{-1}(t_0) P_z(w) dw\Big) + (1 - \alpha) \exp\Big(\int_0^{2\pi} \log \phi_w^{-1}(t_1) P_z(w) dw\Big) \\
        & = & \alpha (I_z(\{\phi_w\})(t_0) + (1 - \alpha) (I_z(\{\phi_w\})(t_1)
\end{eqnarray*}

Since $I_z(\{\phi_w\})$ is concave and nonnegative, the only way for (2) to not be satisfied is if there is $0 < t' < t_0$ such that $I_z(\{\phi_w\})(t) = 0$ for $0 \leq t < t'$, which would imply that $w \mapsto \log \phi_w^{-1}(t)$ is not integrable for $0 \leq t < t'$.
\end{proof}

\begin{definition}\label{def:interpolatedphi}
Let $\{\phi_w\}_{w \in \T}$ be an admissible family of Orlicz functions. A real number $t$ such that $I_z(\{\phi_w\})$ is concave and stricly increasing on $[0, t)$ will be called a \emph{witness} of the family.

Fixed a witness $t$ of $\{\phi_w\}_{w \in \T}$ and $z \in \D$, we denote by $\phi_z$ an extension of $(I_z(\{\phi_w\})|_{[0, t)})^{-1}$ to an Orlicz function.
\end{definition}

\begin{lemma}\label{lem:dualfamilyOrlicz}
Let $\{\phi_w\}_{w \in \T}$ be a strongly admissible family of N-functions. Then the family $\{\phi_w^*\}_{w \in \T}$ is admissible.
\end{lemma}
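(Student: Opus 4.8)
The plan is to verify the three conditions of Definition~\ref{def:admissibleOrliczfamily} for the family $\{\phi_w^*\}_{w\in\T}$, where $\phi_w^*(y)=\sup\{xy-\phi_w(x):x>0\}$ is the complementary function. Throughout I would use the classical facts that $\phi_w^*$ is again a non-degenerate N-function, and that a non-degenerate N-function is a continuous strictly increasing bijection of $[0,\infty)$ onto itself, so that $\phi_w$, $\phi_w^*$ and their inverses $\phi_w^{-1}$, $(\phi_w^*)^{-1}$ are all well-defined and continuous.

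For condition (1), since $\phi_w$ is a finite convex function it is continuous on $(0,\infty)$, so $\phi_w^*(y)=\sup\{qy-\phi_w(q):q\in\Q,\ q>0\}$; as $w\mapsto qy-\phi_w(q)$ is Borel for each fixed $q$ by condition (1) for $\{\phi_w\}$, the map $w\mapsto\phi_w^*(y)$ is a countable supremum of Borel functions, hence Borel. The same kind of argument shows $w\mapsto\phi_w^{-1}(t)$ and $w\mapsto(\phi_w^*)^{-1}(t)$ are Borel: for a continuous strictly increasing bijection $\psi$ of $[0,\infty)$ one has $\psi^{-1}(t)\le a\iff t\le\psi(a)$, so $\{w:\phi_w^{-1}(t)\le a\}=\{w:\phi_w(a)\ge t\}$ and likewise for $\phi_w^*$, and these are Borel since $w\mapsto\phi_w(a)$ and $w\mapsto\phi_w^*(a)$ are.

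For condition (2), the essential input is the classical estimate $t\le\phi^{-1}(t)(\phi^*)^{-1}(t)\le 2t$, valid for all $t\ge0$ and any complementary pair of N-functions (see \cite{RaoRen}), applied to each $\phi_w$. Taking logarithms gives
\[
\log t-\log\phi_w^{-1}(t)\ \le\ \log(\phi_w^*)^{-1}(t)\ \le\ \log 2+\log t-\log\phi_w^{-1}(t),
\]
so on the interval $(0,t_0)$ furnished by condition (2) for $\{\phi_w\}$, the Borel function $w\mapsto\log(\phi_w^*)^{-1}(t)$ is trapped between two integrable functions (each a constant plus the integrable function $-\log\phi_w^{-1}(t)$), hence integrable; thus the same $t_0$ witnesses condition (2) for $\{\phi_w^*\}$.

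For condition (3) I would pass to Köthe duals. By Proposition~\ref{props:afamilyisadmissible} the family $\{\ell_{\phi_w}\}$ is strongly admissible, so Theorem~\ref{thm-duality} gives that $\{\ell_{\phi_w}'\}$ is admissible; in particular there are strictly positive $h,k$ with $\|yh\|_1\le\|y\|_{\ell_{\phi_w}'}\le\|yk\|_\infty$ for a.e.\ $w$. Since each $\phi_w$ is an N-function satisfying the $\Delta_2$-condition at $0$ we have $\ell_{\phi_w}'=\ell_{\phi_w}^*$, and \eqref{eq:equivalencedual} gives $\tfrac{1}{2}\|y\|_{\ell_{\phi_w}^*}\le\|y\|_{\ell_{\phi_w^*}}\le\|y\|_{\ell_{\phi_w}^*}$ for a.e.\ $w$; combining these we obtain $\tfrac{1}{2}\|yh\|_1\le\|y\|_{\ell_{\phi_w^*}}\le\|yk\|_\infty$ for a.e.\ $w$, which is condition (3). (One can also bypass Theorem~\ref{thm-duality}: from $\|xh_0\|_1\le\|x\|_{\ell_{\phi_w}}\le\|xk_0\|_\infty$ one gets the unit-ball inclusions $B_{\ell_\infty(k_0)}\subseteq B_{\ell_{\phi_w}}\subseteq B_{\ell_1(h_0)}$, whence $\|y/k_0\|_1\le\|y\|_{\ell_{\phi_w}'}\le\|y/h_0\|_\infty$, and then \eqref{eq:equivalencedual} finishes it.) I expect the only genuinely delicate step to be condition (2): it hinges on the non-trivial inequality relating $\phi^{-1}$ and $(\phi^*)^{-1}$ and on carrying the Borel measurability of the inverse functions through the estimate; conditions (1) and (3) are essentially bookkeeping, the main care being to track the almost-everywhere qualifiers.
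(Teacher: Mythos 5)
Your proof is correct and follows essentially the same route as the paper's: Borel measurability of $w\mapsto\phi_w^*(y)$ via a supremum over rationals, condition (2) via the Young-type estimate $t\le\phi_w^{-1}(t)(\phi_w^*)^{-1}(t)\le 2t$ from \cite{RaoRen}, and condition (3) via Theorem~\ref{thm-duality} together with \eqref{eq:equivalencedual}. The extra remarks (Borelness of the inverses, the alternative direct argument for (3)) are harmless additions, not deviations.
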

\begin{proof}
By Theorem \ref{thm-duality} and \eqref{eq:equivalencedual} we know that the family $\{\ell_{\phi_w^*}\}_{w \in \T}$ is admissible, so that we have (3) of Definition \ref{def:admissibleOrliczfamily}.

Let $\{q_n\}$ be an enumeration of the nonnegative rationals, and for each $n \in \N$, $w \in \T$ and $y > 0$ let $\psi_{y, n}(w) = q_n y - \phi_w(q_n)$. Then, by the continuity of $\phi_w$, $\phi_w^*(y) = \sup_{n \in \N} \psi_{y, n}(w)$, so that the map $w \mapsto \phi_w^*(y)$ is Borel for each $y \geq 0$, and we have (1) of Definition \ref{def:admissibleOrliczfamily}.

Finally, by \cite[Proposition 2.1.1(ii)]{RaoRen}, $t < \phi_w^{-1}(t) (\phi_w^*)^{-1}(t) < 2t$ for each $w \in \T$ and $t \geq 0$, so that if $t$ is a witness of $\{\phi_w\}_{w \in \T}$ and $0 < t' < t$
\[
\int_{-\pi}^{\pi} \log (\phi_w^*)^{-1}(t') dw \geq 2\pi t' - \int_{-\pi}^{\pi} \log \phi_w^{-1}(t') dw > -\infty
\]
and similarly the integral is not $+\infty$, so that the family $\{\phi_w^*\}$ is admissible.
\end{proof}

\begin{theorem}\label{thm:interpolationfamilies}
Let $\{\phi_w\}_{w \in \T}$ be a strongly admissible family of N-functions. Consider the family $\mathcal{H} = \{X_w\}_{w \in \T}$ such that $X_w = \ell_{\phi_w}$ and let $\phi_z$ be as in Definition \ref{def:interpolatedphi}. Then $X_z = \ell_{\phi_z}$ with equivalence of norms, and there is an $a > 0$ such that
\[
\Omega_z(x)(n) = \frac{x(n)}{\pi} \int_{-\pi}^{\pi} \frac{e^{it}}{(e^{it} - z)^2} \log \Big(\phi_w^{-1}\Big(\phi_z\Big(\frac{a\left|x(n)\right|}{\|x\|_{\ell_{\phi_z}}}\Big)\Big)\Big) dt
\]
for every $x \in \ell_{\phi_z}$.
\end{theorem}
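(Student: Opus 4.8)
The plan is to derive both assertions from the Lozanovskii-type description of $X_z$ recorded in the definition following \cite[Theorem 3.3]{KaltonKothe} and from the formula for $\Omega_z$ in Proposition \ref{props:inducedcent}, once we produce an explicit factorization map for $\mathcal{H}$ at $z$ whose ``columns'' run along the scales $\phi_w^{-1}$. Fix a witness $t$ of $\{\phi_w\}$ and let $\phi_z$ be as in Definition \ref{def:interpolatedphi}. Set $b=\lim_{s\to t^-}I_z(\{\phi_w\})(s)$, which is positive because $I_z(\{\phi_w\})$ is strictly increasing on $[0,t)$ with value $0$ at $0$. On $[0,b)$ the function $\phi_z$ coincides with $(I_z(\{\phi_w\})|_{[0,t)})^{-1}$; hence $\phi_z$ maps $[0,b)$ strictly increasingly and continuously onto $[0,t)$, and $I_z(\{\phi_w\})(\phi_z(u))=u$ for $u\in[0,b)$. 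Since each $\phi_w$ is an $N$-function it is a strictly increasing continuous bijection of $[0,\infty)$, so $\phi_w(\phi_w^{-1}(s))=s$ for all $s\ge 0$.

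The first step is to choose $a\in(0,1]$ small enough that $a\,\phi_z^{-1}(1)<b$ and to set, for $x\in\ell_{\phi_z}\setminus\{0\}$,
\[
B_z(x)(n,w)=\frac{\|x\|_{\ell_{\phi_z}}}{a}\,\phi_w^{-1}\!\left(\phi_z\!\left(\frac{a\,|x(n)|}{\|x\|_{\ell_{\phi_z}}}\right)\right),\qquad B_z(0)=0.
\]
The role of $a$ is that $\sum_n\phi_z(|x(n)|/\|x\|_{\ell_{\phi_z}})\le 1$ forces $a\,|x(n)|/\|x\|_{\ell_{\phi_z}}\le a\,\phi_z^{-1}(1)<b$, so the argument of $\phi_z$ always lies in the range where $\phi_z$ genuinely inverts $I_z(\{\phi_w\})$. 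Conditions (1) and (2) of Definition \ref{def:inducedcent} are immediate. For (3) one evaluates $\frac{1}{2\pi}\int_{-\pi}^{\pi}P(r,\theta-t)\log B_z(x)(n,e^{it})\,dt$: the constant summand $\log(\|x\|_{\ell_{\phi_z}}/a)$ reproduces itself, and the remaining integral equals $\log I_z(\{\phi_w\})(\phi_z(a|x(n)|/\|x\|_{\ell_{\phi_z}}))=\log(a|x(n)|/\|x\|_{\ell_{\phi_z}})$ by the identity above. For (4), with $\rho=\|x\|_{\ell_{\phi_z}}/a$ one computes $\sum_n\phi_w(B_z(x)(n,w)/\rho)=\sum_n\phi_z(a|x(n)|/\|x\|_{\ell_{\phi_z}})\le\sum_n\phi_z(|x(n)|/\|x\|_{\ell_{\phi_z}})\le 1$ for a.\,e.\ $w$ (here $a\le 1$ and the monotonicity of $\phi_z$ are used), so $\|B_z(x)\|_\E\le\|x\|_{\ell_{\phi_z}}/a$. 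Thus $B_z$ is a factorization map with constant $1/a$, and \cite[Theorem 3.3]{KaltonKothe} already yields $\ell_{\phi_z}\subseteq X_z$ with $\|x\|_{X_z}\le a^{-1}\|x\|_{\ell_{\phi_z}}$.

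The reverse inclusion is the step I expect to be the real work. Start from an arbitrary $\phi\in\E$ with $\phi\ge 0$ realizing $|x(n)|=\exp(\frac{1}{2\pi}\int_{-\pi}^{\pi}P(r,\theta-t)\log\phi(n,e^{it})\,dt)$, set $\rho=\|\phi\|_\E$, fix $c>1/t$, and for a.\,e.\ $w$ put $s_n(w)=\phi_w(\phi(n,w)/(c\rho))$ and $S_n=\frac{1}{2\pi}\int_{-\pi}^{\pi}P(r,\theta-t)s_n(e^{it})\,dt$. Then $\sum_ns_n(w)\le 1/c$ for a.\,e.\ $w$, hence $\sum_nS_n\le 1/c$ and $S_n<t$. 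Concavity of $\phi_w^{-1}$ together with $\phi_w^{-1}(0)=0$ gives the pointwise bound $\phi_w^{-1}(s)\le\max(1,s/S_n)\,\phi_w^{-1}(S_n)\le(1+s/S_n)\,\phi_w^{-1}(S_n)$; taking logarithms, integrating against the (probability) Poisson measure, and applying Jensen's inequality to the concave function $\log$ yields
\[
\frac{|x(n)|}{c\rho}=\exp\!\left(\frac{1}{2\pi}\int_{-\pi}^{\pi}P(r,\theta-t)\log\frac{\phi(n,e^{it})}{c\rho}\,dt\right)\le 2\,\phi_z^{-1}(S_n).
\]
Applying $\phi_z$ (using $S_n<t$) and summing over $n$ gives $\sum_n\phi_z(|x(n)|/(2c\rho))\le\sum_nS_n\le 1$, i.e.\ $\|x\|_{\ell_{\phi_z}}\le 2c\,\|\phi\|_\E$; taking the infimum over all such $\phi$ gives $\|x\|_{\ell_{\phi_z}}\le 2c\,\|x\|_{X_z}$ and completes the identification $X_z=\ell_{\phi_z}$ with equivalent norms. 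The delicate points are keeping every Poisson average inside the witness interval — which is exactly why the auxiliary constants $a$ and $c$ are forced on us — and checking the Tonelli/monotone-convergence manipulations that define the $S_n$ and the standard fact that the modular inequality persists at the Luxemburg norm by continuity of $\phi_z$.

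Finally, for the derivation formula I feed the factorization map $B_z$ of the second paragraph into Proposition \ref{props:inducedcent}. Since $\log B_z(x)(n,e^{it})=\log(\|x\|_{\ell_{\phi_z}}/a)+\log\phi_{e^{it}}^{-1}(\phi_z(a|x(n)|/\|x\|_{\ell_{\phi_z}}))$ and the constant summand contributes nothing, because
\[
\int_{-\pi}^{\pi}\frac{e^{it}}{(e^{it}-z)^2}\,dt=\frac{1}{i}\oint_{|\zeta|=1}\frac{d\zeta}{(\zeta-z)^2}=0
\]
(the integrand being the derivative of $-(\zeta-z)^{-1}$, equivalently because the residue at $z$ vanishes), we are left precisely with the asserted expression for $\Omega_z(x)(n)$, with $w=e^{it}$ in the integrand.
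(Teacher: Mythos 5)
Your proposal is correct, and for the easy half (the inclusion $\ell_{\phi_z}\subseteq X_z$, the explicit factorization map, and the cancellation of the constant via $\int_{-\pi}^{\pi}\frac{e^{it}}{(e^{it}-z)^2}\,dt=0$) it coincides with the paper's argument; incidentally your normalization $\|x\|_{\ell_{\phi_z}}/a$ in $B_z$ is the one that makes condition (3) of Definition \ref{def:inducedcent} exact. Where you genuinely diverge is the hard half, $X_z\subseteq\ell_{\phi_z}$. The paper obtains it by duality: it passes to the conjugate family $\{\phi_w^*\}$ (Lemma \ref{lem:dualfamilyOrlicz}), applies the first half to that family, uses Kalton's duality theorem $Y_z=X_z'$ together with the inequalities $t<\phi^{-1}(t)(\phi^*)^{-1}(t)<2t$ and \cite[Proposition 3.3.4]{RaoRen}; this is precisely where the N-function and $\Delta_2$ hypotheses are used. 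You instead argue directly on an arbitrary Lozanovskii representative $\phi\in\E$: you Poisson-average the modulars $s_n(w)=\phi_w(\phi(n,w)/(c\rho))$ to get $S_n$, use concavity of $\phi_w^{-1}$ (with $\phi_w^{-1}(0)=0$) plus Jensen for $\log$ to get $|x(n)|\le 2c\rho\,\phi_z^{-1}(S_n)$, apply $\phi_z$ and sum. This is more elementary (no duality, no conjugate family) and actually needs less than the theorem assumes — essentially only nondegeneracy/invertibility of the $\phi_w$ — so it also covers situations like the finite-family Theorem \ref{thm:interpolationfiniteOrlicz}, where the paper resorts to a separate argument; the price is that you must check by hand the measurability of $w\mapsto s_n(w)$, the Tonelli interchange giving $\sum_n S_n\le 1/c$, and the integrability needed to split the logarithms, which you flag but do not write out.

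Two small points to tighten. First, you should take $c\ge\max(1,\,1/t)$ (not merely $c>1/t$): the step $\phi_w(u/c)\le\phi_w(u)/c$ needs $c\ge1$, and $1/c<t$ is what keeps $S_n$ inside the witness interval; also note the degenerate case $S_n=0$, where necessarily $|x(n)|=0$ and the bound is vacuous. Second, condition (4) of Definition \ref{def:inducedcent} is stated against $\|x\|_{X_z}$, so your $B_z$ is only a factorization map (with constant $2c/a$) once the norm equivalence is established; since you prove the equivalence before invoking Proposition \ref{props:inducedcent}, the logical order is fine, but the phrase ``factorization map with constant $1/a$'' at the end of your second paragraph is premature as written.
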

\begin{proof}
Fix a witness $t$ for the family $\{\phi_w\}$. Since we are dealing with sequence spaces, we may find $a \leq 1$ so that if $x \in \ell_{\phi_z}$ is such that $\|x\|_{\ell_{\phi_z}} \leq a$ then $\phi_z(\left|x(n)\right|) < t$ for every $n \in \N$. Indeed, take $a$ such that $\phi_z(a \phi_z^{-1}(1)) < t$. If $\|x\|_{\ell_z} \leq a$ then $\phi_z(\left|x(n)\right|/a) \leq 1$ implies $\phi_z(\left|x(n)\right|) \leq \phi_z(a \phi_z^{-1}(1)) < t$.

Let $\phi(n, w) = \phi_w^{-1}(\phi_z(\left|x(n)\right|))$. Then
\begin{eqnarray*}
&& \exp(\frac{1}{2\pi} \int_{-\pi}^{\pi} P(r, \theta - t) \log \phi(n, e^{it}) dt) \\ 
& = & \exp(\frac{1}{2\pi} \int_{-\pi}^{\pi} P(r, \theta - t) \log \phi_{e^{it}}^{-1}(\phi_z(\left|x(n)\right|)) dt) \\
    & = & I_z(\{\phi_w\}) (\phi_z(\left|x(n)\right|)) \\
    & = & \left|x(n)\right|
\end{eqnarray*}

Also, $\|\phi\|_{\E} = \esssup{\|\phi_w^{-1}(\phi_z(\left|x(n)\right|))\|_{\ell_{\phi_w}}} \leq 1$. Indeed, for $w \in \T$
\begin{eqnarray*}
\sum_{n=1}^{\infty} \phi_w (\phi_w^{-1}(\phi_z(\left|x(n)\right|))) & = & \sum_{n=1}^{\infty} \phi_z(\left|x(n)\right|) \leq 1
\end{eqnarray*}
since $\|x\|_{\ell_z} \leq a \leq 1$. So we obtain $\|x\|_{X_z} \leq \frac{1}{a} \|x\|_{\ell_{\phi_z}}$.

Now let $x \in X_z$ of norm $1$. Consider the family $\mathcal{H}^* = \{Y_w\}_{w \in \T} = \{\ell_{\psi_w}\}_{w \in \T}$, where $\psi_w = \phi_w^*$. By Lemma \ref{lem:dualfamilyOrlicz} the family $\{\psi_w\}$ is admissible and by the previous calculation, $\ell_{\psi_z} \subset Y_z$ with continuous inclusion.

The proof now follows \cite[Lemma 6.3.3]{RaoRen}. By \cite[Proposition 2.1.1(ii)]{RaoRen} we have $t < \phi_z^{-1}(t) (\phi_z^*)^{-1}(t) < 2t$
and $t < \phi_w^{-1}(t) \psi_w^{-1}(t) < 2t$ for every $t > 0$, $w \in \T$. So if $t'$ is a witness for both $\{\phi_w\}$ and $\{\psi_w\}$ then for $t < t'$ that
\begin{eqnarray*}
(\phi_z^*)^{-1}(t) & \leq & \frac{2t}{\phi_z^{-1}(t)} \\
        & = & 2t\exp\Big(-\frac{1}{2\pi} \int_{-\pi}^{\pi} P(z, \theta - s) \log \phi_{e^{is}}^{-1}(t)ds\Big) \\
        & = & 2 \exp\Big(\frac{1}{2\pi} \int_{-\pi}^{\pi} P(z, \theta - s) \log \frac{t}{\phi_{e^{is}}^{-1}(t)} ds\Big) \\
        & < & 2 \exp\Big(\frac{1}{2\pi} \int_{-\pi}^{\pi} P(z, \theta - s) \log \psi_{e^{is}}^{-1} (t) ds\Big) \\
        & = & 2 \psi_z^{-1}(t)
\end{eqnarray*}

If $t_0$ is such that $\psi_z(t) < t'$ for $t \in [0, t_0]$, replacing $t$ by $\psi_z(t)$ we obtain $\psi_z(t) < \phi_z^*(2t)$ for every $t \in [0, t_0]$. This means that there is a constant $C \geq 1$ such that $\|.\|_{\psi_z} \leq C \|.\|_{\phi_z^*}$. So we find a constant $C'$ such that if $x \in X_z$
\begin{eqnarray*}
\sup_{y^* \in B_{(\ell_{\phi_z})^*}} \left|y^*(x)\right| & \leq & \sup_{y^* \in B_{\ell_{\phi_z^*}}} \left|y^*(x)\right| \\
                      & \leq & C \sup_{y^* \in B_{\ell_{\psi_z}}} \left|y^*(x)\right| \\
                      & \leq & C' \sup_{y^* \in B_{X_z^*}} \left|y^*(x)\right| \\
                      & \leq & C' \|x\|_{X_z}
\end{eqnarray*}
(The first inequality is \eqref{eq:equivalencedual})

Now \cite[Proposition 3.3.4]{RaoRen} tells us that we have a continuous inclusion $X_z \subset \ell_{\phi_z}$, and that
\[
B_z(x)(n, w) = a \|x\|_{\ell_{\phi_z}} \phi_w^{-1}(\phi_z\Big(\frac{a\left|x(n)\right|}{\|x\|_{\ell_{\phi_z}}}\Big)) \text{, } x \in \ell_{\phi_z}
\]
is a factorization map. Since $\int_{-\pi}^{\pi} \frac{e^{it}}{(e^{it} - z)^2} dt = 0$, we obtain the formula for $\Omega_z$.
\end{proof}

Of course, the previous theorem generalizes the classical results regarding complex interpolation of couples of Orlicz spaces \cite{Gustavsson1977}, stating that $(\ell_{\phi_0}, \ell_{\phi_1})_{\theta} = \ell_{\phi}$, with
\[
\phi^{-1} = (\phi_0^{-1})^{1 - \theta} (\phi_1^{-1})^{\theta}
\]

See \cite{IntepolationOrlicz} for a generalization to complex interpolation of couples of Orlicz spaces with respect to a vector measure.

\section{The example}\label{sec:example}

We have the following characterization of the interpolation spaces $X_z$ for finite families:

\begin{theorem}\cite[Proposition 3.11]{Stability}\label{thm:interpolationfinitefamilies}
Let $\mathcal{H} = \{X^j; A_j\}_{j=1}^n$ be a finite admissible family. Let $\mu_z$ denote harmonic measure on $\T$ with respect to $z \in \D$. Then $X_z$ coincides with the space
\begin{equation}\label{eq:def-prod}
\prod_{j=1}^n (X^j)^{\mu_z(A_j)} = \{x \in \ell_0 : \left|x\right| = \prod_{j=1}^n \left|x_j\right|^{\mu_z(A_j)}, f_j \in X^j\}
\end{equation}
and $\|x\|_{X_z} = \inf\{\prod_{j=1}^n \|x_j\|_{X^j}^{\mu_z(A_j)}\}$, where the infimum is over all $x_j$ as in \eqref{eq:def-prod}.
\end{theorem}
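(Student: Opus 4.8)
The plan is to deduce this from the general factorization description of $X_z$ in terms of $\E$ (the definition attributed to \cite[Theorem 3.3]{KaltonKothe}) by exploiting the fact that a finite family is piecewise constant on arcs, so the integrals defining $X_z$ collapse to finite products weighted by harmonic measure. First I would recall that, by that theorem, $x \in X_z$ iff there is $\phi \in \E$ with $\phi \geq 0$ and $\left|x(n)\right| = \exp\big(\frac{1}{2\pi}\int_{-\pi}^\pi P(r,\theta-t)\log\phi(n,e^{it})\,dt\big)$, with $\|x\|_{X_z} = \inf\|\phi\|_\E$. Since $\mathcal{H} = \{X^j; A_j\}$ with $X_w = X^j$ for $w \in A_j$, the condition $\|\phi\|_\E \leq 1$ is equivalent to $\|\phi(\cdot,w)\|_{X^j} \leq 1$ for a.e.\ $w \in A_j$, for each $j$; and because the defining formula for $\left|x(n)\right|$ only sees $\log\phi(n,\cdot)$ through its Poisson integral (equivalently, integration against harmonic measure $\mu_z$, which on an arc $A_j$ has total mass $\mu_z(A_j)$), nothing is lost by replacing $\phi(\cdot,w)$ on each arc $A_j$ by a single sequence. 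So the optimization reduces to: choose, for each $j$, a sequence $x_j \in X^j$, and set $\phi(\cdot, w) = x_j$ for $w \in A_j$.

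Concretely, the forward inclusion: given $x$ with a decomposition $\left|x\right| = \prod_j \left|x_j\right|^{\mu_z(A_j)}$, $x_j \in X^j$, I would define $\phi(n, w) = \left|x_j(n)\right| / \|x_j\|_{X^j}$ for $w \in A_j$ (normalizing so $\|\phi\|_\E \le 1$, adjusting by the scalar $\prod_j \|x_j\|_{X^j}^{\mu_z(A_j)}$ at the end), and check that the Poisson-integral formula reproduces $\left|x(n)\right|$ up to that scalar, using $\frac{1}{2\pi}\int_{A_j} P(r,\theta - t)\,dt = \mu_z(A_j)$. This shows $x \in X_z$ with $\|x\|_{X_z} \le \prod_j \|x_j\|_{X^j}^{\mu_z(A_j)}$, hence $\le \inf\{\cdots\}$. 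For the reverse inclusion and norm inequality: given $x \in X_z$ and any admissible $\phi \in \E$ with $\|\phi\|_\E \le \|x\|_{X_z} + \varepsilon$, I would, for each arc $A_j$, produce a single sequence $x_j \in X^j$ with $\|x_j\|_{X^j} \le \operatorname{ess\,sup}_{w\in A_j}\|\phi(\cdot,w)\|_{X^j}$ and $\log\left|x_j(n)\right| = \frac{1}{\mu_z(A_j)}\cdot\frac{1}{2\pi}\int_{A_j} P(r,\theta-t)\log\phi(n,e^{it})\,dt$ — that is, $x_j(n)$ is the geometric mean of $\phi(n,\cdot)$ over $A_j$ against normalized harmonic measure. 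Then $\left|x\right| = \prod_j \left|x_j\right|^{\mu_z(A_j)}$ by construction, and $\prod_j \|x_j\|_{X^j}^{\mu_z(A_j)} \le \|\phi\|_\E \le \|x\|_{X_z} + \varepsilon$.

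The one genuine point requiring care — the main obstacle — is the existence and membership claim for $x_j$: that the ``geometric mean over the arc'' of a family $w \mapsto \phi(\cdot, w)$ lying in the unit ball (scaled) of $X^j$ again lies in $X^j$ with controlled norm. This is exactly the kind of log-convexity/AM--GM estimate carried out in Proposition \ref{props:concave} and, in the function-space setting, is implicit in Kalton's proof of \cite[Theorem 3.3]{KaltonKothe}; one invokes that $x \mapsto \exp(\int \log|\cdot|\,d\nu)$ maps products of balls into the ball, together with property (1) (closedness of $B_{X^j}$ in $\C^{\N}$) and property (2) (solidity) of a K\"othe function space to pass from simple approximants to the limit. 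Since the statement is quoted from \cite[Proposition 3.11]{Stability}, I would in practice cite that reference for this step rather than reprove it; the remainder is the bookkeeping with harmonic measure on the arcs $A_j$ sketched above. Finally I would note that the measurability hypotheses in the definition of admissible family guarantee everything in sight ($w \mapsto \|\phi(\cdot,w)\|_{X_w}$ Borel, the arcs $A_j$ Borel) so that the essential suprema and integrals are well defined.
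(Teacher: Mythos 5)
The paper itself offers no proof of this statement: it is quoted verbatim from \cite[Proposition 3.11]{Stability}, so there is no in-paper argument to compare against. Your sketch is, however, a sound reconstruction along the expected lines, and it matches how the result is obtained from Kalton's factorization description of $X_z$ (\cite[Theorem 3.3]{KaltonKothe}, which the paper notes is valid for admissible families). The forward direction is exactly right: taking $\phi(n,w)=|x_j(n)|/\|x_j\|_{X^j}$ on $A_j$ and using $\frac{1}{2\pi}\int_{A_j}P(r,\theta-t)\,dt=\mu_z(A_j)$ gives $\|x\|_{X_z}\le\prod_j\|x_j\|_{X^j}^{\mu_z(A_j)}$. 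For the reverse direction you correctly isolate the crux: that the geometric mean $x_j(n)=\exp\big(\frac{1}{\mu_z(A_j)}\cdot\frac{1}{2\pi}\int_{A_j}P(r,\theta-t)\log\phi(n,e^{it})\,dt\big)$ lies in $X^j$ with $\|x_j\|_{X^j}\le\esssup_{w\in A_j}\|\phi(\cdot,w)\|_{X^j}$; this is the log-convexity of the unit ball, and it can be settled by Jensen's inequality ($x_j(n)\le\int_{A_j}\phi(n,w)\,d\nu_j(w)$ with $\nu_j$ the normalized harmonic measure on $A_j$) combined with solidity and the closedness of $B_{X^j}$ in $\C^{\N}$ (a Fatou-type property, or equivalently a duality/$X''$ argument) to show the coordinatewise average stays in the ball. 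Two small points worth making explicit if you wrote this out: the admissibility bound $\|xh\|_1\le\|x\|_w$ gives $\phi(n,w)\le\|\phi\|_{\E}/h(n)$, so $\log\phi(n,\cdot)$ is bounded above and the arc integrals are well defined in $[-\infty,\infty)$; and coordinates with $x(n)=0$ (some arc integral equal to $-\infty$) should be handled separately by setting the corresponding $x_j(n)=0$, which does not disturb the product formula. With those caveats, or simply with the citation of \cite{Stability} for the geometric-mean step as you suggest, the argument is complete.
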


In the proof of Theorem \ref{thm:interpolationfamilies} we asked that the functions $\phi_w$ be N-functions satisfying the $\Delta_2$-condition at 0 so that we may use duality and the invertibility of $\phi_w^*$. The next theorem shows that when we are dealing with finite families we do not need these tools. Its proof should be compared with that of \cite[Lemma 4.1]{IntepolationOrlicz}.

\begin{theorem}\label{thm:interpolationfiniteOrlicz}
Let $\mathcal{H} = \{\ell_{\phi_j}; A_j\}_{j=1}^n$ be an admissible family such that the functions $\phi_j$ are non-degenerate. Then $X_z = \ell_{\phi_z}$ with equivalence of norms for every $z \in \D$, where
\[
\phi_z^{-1} = \prod_{j = 1}^n (\phi_j^{-1})^{\mu_z(A_j)}
\]

The induced centralizer is
\[
\Omega_z(x) = x \sum_{j=1}^n \psi_j'(z) \log \phi_j^{-1}\Big(\phi_z\Big(\frac{\left|x(n)\right|}{\|x\|_{\ell_{\phi_z}}}\Big)\Big)
\]
for $x \in \ell_{\phi_z}$, where $\psi_j$ is an analytic function on $\D$ which real part agrees with $\chi_{A_j}$ on $\T$.
\end{theorem}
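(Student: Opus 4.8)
The strategy is to reduce, via Theorem~\ref{thm:interpolationfinitefamilies}, to a ``static'' computation about Calder\'on products of Orlicz sequence spaces, which is why the $\Delta_2$-condition and duality are not needed here. Write $\theta_j = \mu_z(A_j)$, so $\sum_j \theta_j = 1$. First I would note that for a finite family $\log \phi_{e^{is}}^{-1}(t) = \sum_j \chi_{A_j}(e^{is}) \log \phi_j^{-1}(t)$, and since $\frac{1}{2\pi}\int_{-\pi}^{\pi} P(r,\theta - s)\chi_{A_j}(e^{is})\,ds = \mu_z(A_j)$ we get $I_z(\{\phi_w\})(t) = \prod_{j=1}^n (\phi_j^{-1}(t))^{\theta_j}$; hence by Proposition~\ref{props:concave} the function $\phi_z^{-1} = \prod_j (\phi_j^{-1})^{\theta_j}$ is concave and strictly increasing on $[0,t)$ for some witness $t>0$, so the Orlicz function $\phi_z$ of Definition~\ref{def:interpolatedphi} makes sense. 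As in the proof of Theorem~\ref{thm:interpolationfamilies}, since we are in a sequence space I may fix $a \leq 1$ small enough that $\phi_z(|x(n)|) < t$ whenever $\|x\|_{\ell_{\phi_z}} \leq a$, which guarantees that on the ranges of arguments that occur below $\phi_z^{-1}$ and each $\phi_j^{-1}$ are genuine inverses. By Theorem~\ref{thm:interpolationfinitefamilies}, $X_z$ coincides with $\prod_j (\ell_{\phi_j})^{\theta_j}$ with the corresponding Calder\'on norm, so the whole point becomes to show this product space is $\ell_{\phi_z}$.

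For the inclusion $\ell_{\phi_z} \subset X_z$, given $x$ with $\|x\|_{\ell_{\phi_z}} \leq a$ I would set $x_j(n) = \phi_j^{-1}(\phi_z(|x(n)|))$; then $\prod_j |x_j(n)|^{\theta_j} = \phi_z^{-1}(\phi_z(|x(n)|)) = |x(n)|$ and $\sum_n \phi_j(|x_j(n)|) = \sum_n \phi_z(|x(n)|) \leq 1$, so $\|x_j\|_{\ell_{\phi_j}} \leq 1$ and Theorem~\ref{thm:interpolationfinitefamilies} gives $\|x\|_{X_z} \leq \prod_j \|x_j\|_{\ell_{\phi_j}}^{\theta_j} \leq 1$; thus $\|x\|_{X_z} \leq \frac{1}{a}\|x\|_{\ell_{\phi_z}}$. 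For the reverse inclusion (which in Theorem~\ref{thm:interpolationfamilies} required duality), suppose $|x(n)| = \prod_j |x_j(n)|^{\theta_j}$ with $x_j \in \ell_{\phi_j}$; after rescaling put $M = \prod_j \|x_j\|_{\ell_{\phi_j}}^{\theta_j}$, $\mu_j(n) = \phi_j(|x_j(n)|/\|x_j\|_{\ell_{\phi_j}})$, so $\sum_n \mu_j(n) \leq 1$ and $|x(n)|/M = \prod_j (\phi_j^{-1}(\mu_j(n)))^{\theta_j}$. The key pointwise estimate is
\[
\prod_j \big(\phi_j^{-1}(\mu_j)\big)^{\theta_j} \leq 2\,\phi_z^{-1}\Big(\sum_j \theta_j \mu_j\Big),
\]
which follows because, writing $\bar\mu = \sum_k \theta_k \mu_k$, concavity of $\phi_j^{-1}$ (so $t \mapsto \phi_j^{-1}(t)/t$ is nonincreasing and $\phi_j^{-1}$ is nondecreasing) gives $\phi_j^{-1}(\mu_j) \leq \max(1,\mu_j/\bar\mu)\,\phi_j^{-1}(\bar\mu) \leq (1+\mu_j/\bar\mu)\,\phi_j^{-1}(\bar\mu)$, and then AM--GM with $\sum_j \theta_j(\mu_j/\bar\mu) = 1$ yields $\prod_j (1+\mu_j/\bar\mu)^{\theta_j} \leq \sum_j \theta_j(1+\mu_j/\bar\mu) = 2$. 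Applying $\phi_z$ (nondecreasing, with $\phi_z \circ \phi_z^{-1} \leq \mathrm{id}$) and summing in $n$ gives $\sum_n \phi_z\big(\tfrac12 |x(n)|/M\big) \leq \sum_j \theta_j \sum_n \mu_j(n) \leq 1$, hence $\|x\|_{\ell_{\phi_z}} \leq 2M$; taking the infimum over representations, $\|x\|_{\ell_{\phi_z}} \leq 2\|x\|_{X_z}$. (Again one reduces to $x$ of small norm so the inverses behave, the general case following by homogeneity.) This establishes $X_z = \ell_{\phi_z}$ with equivalent norms.

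With the space identified, I would verify that $B_z(x)(n,w) = \frac{\|x\|_{\ell_{\phi_z}}}{a}\,\phi_w^{-1}\big(\phi_z(a|x(n)|/\|x\|_{\ell_{\phi_z}})\big)$ is a factorization map for $\mathcal{H}$ at $z$ in the sense of Definition~\ref{def:inducedcent}: positivity and homogeneity are immediate; property (3) follows from $\exp\big(\tfrac{1}{2\pi}\int P(r,\theta - s)\log B_z(x)(n,e^{is})\,ds\big) = \frac{\|x\|_{\ell_{\phi_z}}}{a}\,I_z(\{\phi_w\})\big(\phi_z(a|x(n)|/\|x\|_{\ell_{\phi_z}})\big) = |x(n)|$; and property (4) from $\sum_n \phi_w\big(\phi_w^{-1}(\phi_z(a|x(n)|/\|x\|_{\ell_{\phi_z}}))\big) = \sum_n \phi_z(a|x(n)|/\|x\|_{\ell_{\phi_z}}) \leq 1$. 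Then Proposition~\ref{props:inducedcent} gives $\Omega_z(x)(n) = \frac{x(n)}{\pi}\int_{-\pi}^{\pi} \frac{e^{it}}{(e^{it}-z)^2}\log B_z(x)(n,e^{it})\,dt$; expanding $\log B_z(x)(n,e^{it}) = \log\frac{\|x\|_{\ell_{\phi_z}}}{a} + \sum_j \chi_{A_j}(e^{it})\log \phi_j^{-1}\big(\phi_z(a|x(n)|/\|x\|_{\ell_{\phi_z}})\big)$, the constant term drops out because $\int_{-\pi}^{\pi}\frac{e^{it}}{(e^{it}-z)^2}\,dt = 0$, and $\frac{1}{\pi}\int_{-\pi}^{\pi}\frac{e^{it}}{(e^{it}-z)^2}\chi_{A_j}(e^{it})\,dt = \psi_j'(z)$, where $\psi_j(z) = \frac{1}{2\pi}\int_{-\pi}^{\pi}\frac{e^{it}+z}{e^{it}-z}\chi_{A_j}(e^{it})\,dt$ is the Herglotz integral, analytic on $\D$ with boundary real part $\chi_{A_j}$. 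This yields $\Omega_z(x)(n) = x(n)\sum_j \psi_j'(z)\log \phi_j^{-1}\big(\phi_z(a|x(n)|/\|x\|_{\ell_{\phi_z}})\big)$; finally, since $\sum_j \chi_{A_j} = 1$ on $\T$ forces $\sum_j \psi_j$ to be constant, $\sum_j \psi_j'(z) = 0$, so the multiplicative constant $a$ inside $\phi_z$ is immaterial and one obtains the stated formula.

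The main obstacle is the Calder\'on--product identification of the second paragraph — getting the pointwise inequality with an absolute constant and, more delicately, carrying it out without the $\Delta_2$-condition, so that $\phi_z$ and the $\phi_j^{-1}$ are controlled only on a neighbourhood of $0$ and all ``$\phi \circ \phi^{-1} = \mathrm{id}$'' steps are restricted to small arguments (this is the analogue, for finite families, of the duality estimate in Theorem~\ref{thm:interpolationfamilies}, and should be compared with \cite[Lemma 4.1]{IntepolationOrlicz}). Everything else is bookkeeping around Theorem~\ref{thm:interpolationfinitefamilies}, Proposition~\ref{props:inducedcent}, and the elementary residue computation $\int_{-\pi}^{\pi}\frac{e^{it}}{(e^{it}-z)^2}\,dt = 0$.
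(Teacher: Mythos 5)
Your overall route is the paper's: identify $I_z(\{\phi_w\})(t)$ with $\prod_j(\phi_j^{-1}(t))^{\mu_z(A_j)}$, obtain $\ell_{\phi_z}\subset X_z$ by mimicking the first half of the proof of Theorem \ref{thm:interpolationfamilies}, use Theorem \ref{thm:interpolationfinitefamilies} for the reverse inclusion, and read off $\Omega_z$ from a factorization map via Proposition \ref{props:inducedcent}. Your reverse inclusion is a mild variant of the paper's: there, given $|x|=\prod_j|x_j|^{\mu_z(A_j)}$ with $\|x_j\|_{\ell_{\phi_j}}\in[1,2]$, one sets $h_j=\phi_j(|x_j|/2)$ and $h=\sum_j h_j$ and uses only $\phi_j^{-1}(h_j)\le\phi_j^{-1}(h)$, getting a modular bound depending on $n$ and then norm equivalence from the open mapping theorem; your weighted average $\bar\mu=\sum_j\theta_j\mu_j$ together with the $\max(1,\mu_j/\bar\mu)$ and AM--GM estimate gives $\|x\|_{\ell_{\phi_z}}\le 2\|x\|_{X_z}$ directly, with a constant independent of $n$. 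Both arguments are correct and rest on the same idea (dominate each modular by a common one and use monotonicity of $\phi_j^{-1}$).

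The one step that does not work is your removal of the constant $a$ at the end. The identity $\sum_j\psi_j'(z)=0$ only annihilates additive terms that are independent of $j$; replacing $\phi_z(a|x(n)|/\|x\|_{\ell_{\phi_z}})$ by $\phi_z(|x(n)|/\|x\|_{\ell_{\phi_z}})$ changes the $j$-th summand by $\log\bigl(\phi_j^{-1}(\phi_z(at))/\phi_j^{-1}(\phi_z(t))\bigr)$, which depends on $j$, so the stated formula does not follow from yours by that argument (and without the $\Delta_2$-condition it is not even clear that the discrepancy is a bounded perturbation). Relatedly, the parenthetical ``reduce to $x$ of small norm so the inverses behave'' does not achieve what you want in the reverse inclusion: the normalized modulars $\mu_j(n)$ are only bounded by $1$ and do not shrink when you rescale $x$. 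Both issues disappear once you observe, as the paper does, that for a finite family of non-degenerate Orlicz functions $I_z(\{\phi_w\})=\prod_j(\phi_j^{-1})^{\mu_z(A_j)}$ is concave and strictly increasing on all of $[0,\infty)$ --- the witness is $+\infty$ --- so $\phi_z^{-1}$ is a genuine global inverse and the first part of the proof of Theorem \ref{thm:interpolationfamilies} can be run with $a=1$; the factorization map then carries no $a$, and the stated formula for $\Omega_z$ comes out exactly as written.
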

\begin{proof}
We begin by noticing that the definition of $\phi_z$ given above coincides with that of Definition \ref{def:interpolatedphi} when the witness is $+\infty$. If we mimic now the first part of the proof of Theorem \ref{thm:interpolationfamilies} for $a=1$ we obtain $\ell_{\phi_z} \subset X_z$ and  $\|.\|_{X_z} \leq \|.\|_{\ell_{\phi_z}}$.

For the other inclusion, let $x \in X_z$ of norm 1. By Theorem \ref{thm:interpolationfinitefamilies}, we may write
\[
\left|x\right| = \prod_{j=1}^n \left|x_j\right|^{\mu_z(A_j)}
\]
with $\|x_j\|_{\ell_{\phi_j}} \in [1, 2]$. Let $h_j \in \C^{\N}$ be given by $h_j(n) = \phi_j\Big(\frac{\left|x_j(n)\right|}{2}\Big)$, and let $h = \sum_{j=1}^n h_j$. Then
\begin{eqnarray*}
\frac{\left|x\right|}{2} & = & \prod \Big(\frac{\left|x_j\right|}{2}\Big)^{\mu_z(A_j)} \\
               & = & \prod \phi_j^{-1}(h_j)^{\mu_z(A_j)} \\
               & \leq & \prod \phi_j^{-1}(h)^{\mu_z(A_j)} \\
               & = & \phi_z^{-1}(h)
\end{eqnarray*}
so that
\[
\sum\limits_k \phi_z\Big(\frac{\left|x(k)\right|}{2}\Big) \leq \sum\limits_k h(k) = \sum\limits_{k, j} \phi_j\Big(\frac{\left|x_j(k)\right|}{2}\Big) \leq n + 1
\]
That is, $x \in \ell_{\phi_z}$, and therefore $X_z = \ell_{\phi_z}$ as sets. By the first part of the proof and the open mapping theorem, the norms are equivalent. We obtain a factorization map and $\Omega_z$ as in Theorem \ref{thm:interpolationfamilies}.
\end{proof}

We now present the promised example:

\begin{theorem}\label{thm:example}
Let $A_0 = \{e^{it} : 0 \leq t < \frac{2\pi}{3}\}$, $A_1 = \{e^{it} : \frac{2\pi}{3} \leq t < \frac{4\pi}{3}\}$ and $A_2 = \{e^{it} : \frac{4\pi}{3} \leq t < 2\pi\}$. Let $\mathcal{H} = \{X_w\}_{w \in \T}$, with $X_w = \ell_{\phi^j}$ for $w \in A_j$, where
\[
\phi^0(t) = t
\]
\[
\phi^1(t) = 5^{-2} t^2 \left|\log(t)\right|^4
\]
\[
\phi^2(t) = 5^{-2} e^{-2 + 2 \sqrt{1 - t}}(2t + 2 \sqrt{1 - t} - 2)^2
\]
on a neighborhood of $0$. Then $X_0 = \ell_2$ with equivalence of norms, and the induced centralizer $\Omega_0$ is not projectively equivalent to any centralizer obtained by complex interpolation from two K\"othe function spaces.
\end{theorem}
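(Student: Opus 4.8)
The plan is to compute the centralizer $\Omega_0$ explicitly using Theorem~\ref{thm:interpolationfiniteOrlicz} and then apply the criterion of Lemma~\ref{lem:realcent} to the family $\mathcal{H}$, showing that $\Omega_0$ fails both conditions (1) and (2) of that lemma, hence is not projectively equivalent to a real centralizer and, by Kalton's Theorem~\ref{thm:Kalton}(1), not induced from two spaces. First I would verify that $X_0 = \ell_2$: since $\mu_0(A_j) = \frac{1}{3}$ for each $j$ (harmonic measure at the origin is normalized arclength), Theorem~\ref{thm:interpolationfiniteOrlicz} gives $\phi_0^{-1} = \prod_j (\phi^j_{\phantom{}}{}^{-1})^{1/3}$; the three functions $\phi^0,\phi^1,\phi^2$ are cooked up precisely so that this product of inverses equals $t \mapsto \sqrt{t}$ near $0$ (equivalently $\phi_0(t) \sim t^2$), which one checks by the substitution $t \mapsto \phi^j(t)$ in each factor. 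The constants $5^{-2}$ and the exact exponents are chosen to make the elementary identity work out; I would record this computation as a short lemma.

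Next I would identify the two real centralizers $\Omega_1, \Omega_2$ with $\Omega_0 \sim \Omega_1 + i\Omega_2$. From the formula in Theorem~\ref{thm:interpolationfiniteOrlicz},
\[
\Omega_0(x)(n) = x(n) \sum_{j=0}^2 \psi_j'(0)\, \log \phi^j_{\phantom{}}{}^{-1}\!\Big(\phi_0\Big(\tfrac{|x(n)|}{\|x\|_{\ell_2}}\Big)\Big),
\]
where $\operatorname{Re}\psi_j = \chi_{A_j}$ on $\T$. Writing $u(n) = \phi_0(|x(n)|/\|x\|_{\ell_2})$, each summand is $\psi_j'(0)$ times the real quantity $\log\phi^j_{\phantom{}}{}^{-1}(u(n))$; since $\sum_j \chi_{A_j} \equiv 1$, one has $\sum_j \psi_j'(0) = 0$, so $\Omega_0$ is a (complex) linear combination of the two ``Orlicz-type'' real centralizers $\Phi^j(x)(n) = x(n)\log\phi^j_{\phantom{}}{}^{-1}(\phi_0(|x(n)|/\|x\|_{\ell_2}))$. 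Because $\psi_0'(0),\psi_1'(0),\psi_2'(0)$ are three nonzero complex numbers summing to zero and no two are real multiples of each other (an explicit computation with the arcs of length $\frac{2\pi}{3}$ gives $\psi_j'(0) = c\, e^{2\pi i j/3}$ type values, up to normalization, after computing the Herglotz integral $\frac{1}{2\pi}\int_{A_j} \frac{e^{it}+z}{e^{it}-z}\,dt$ and differentiating at $z=0$), the real and imaginary parts of $\Omega_0$ are two genuinely different real linear combinations of $\Phi^0,\Phi^1,\Phi^2$. Since $\Phi^0$ is trivial ($\phi^0(t)=t$ so $\log\phi^0_{\phantom{}}{}^{-1} \equiv 0$), effectively $\Omega_1$ and $\Omega_2$ are two distinct nonzero real combinations of the two nontrivial centralizers $\Phi^1$ and $\Phi^2$.

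The crux of the argument — and the main obstacle — is then to show that $\Phi^1$ and $\Phi^2$ are \emph{not} projectively equivalent and that neither $\Omega_1$ nor $\Omega_2$ is trivial, so that Lemma~\ref{lem:realcent} applies with both of its alternatives failing. This is where the specific shapes of $\phi^1$ and $\phi^2$ matter: $\Phi^1$ is essentially the Kalton–Peck type map associated with $\ell_2$ via $\phi^1(t) \approx t^2|\log t|^4$, whose generating function behaves like $\log\phi^1_{\phantom{}}{}^{-1}(s) \sim c\log|\log s|$ near $0$ (a ``doubly logarithmic'' growth), whereas $\phi^2$ is designed so that $\log\phi^2_{\phantom{}}{}^{-1}(s)$ grows like $c\sqrt{|\log s|}$ — a genuinely different order of magnitude. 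Nontriviality of each follows since these generating functions are unbounded, while projective non-equivalence follows because the ratio of a $\sqrt{\log}$-type generating function to a $\log\log$-type one is unbounded, and bounded equivalence of centralizers on $\ell_2$ forces the corresponding difference of generating functions to be the symbol of a bounded-plus-linear perturbation, which one rules out by testing on suitably spread-out vectors (the standard Kalton device: evaluate $\Omega(x) - \lambda\Psi(x)$ on normalized blocks $x = N^{-1/2}\sum_{n=1}^N e_n$ and let $N \to \infty$). I expect the bookkeeping of these asymptotics — and making precise the passage from ``$\Phi^1 \not\sim_{\mathrm{proj}} \Phi^2$'' to ``$\Omega_1 \not\sim_{\mathrm{proj}} \Omega_2$'' given that both are combinations involving the trivial $\Phi^0$ — to be the delicate part; once it is in place, Lemma~\ref{lem:realcent} and Theorem~\ref{thm:Kalton}(1) close the argument immediately.
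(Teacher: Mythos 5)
Your skeleton is the same as the paper's: compute $\Omega_0$ from Theorem \ref{thm:interpolationfiniteOrlicz}, split $\Omega_0\sim\Omega_1+i\Omega_2$ as in \eqref{eq:Omega1def}--\eqref{eq:Omega2def}, test on the normalized blocks $s_n=n^{-1/2}\sum_{j\le n}e_j$, and conclude via Lemma \ref{lem:realcent} together with Theorem \ref{thm:Kalton}(1). But the substance of the proof is exactly what you defer, and two of your intermediate claims are wrong. First, ``$\Phi^0$ is trivial because $\log(\phi^0)^{-1}\equiv 0$'' is false: $(\phi^0)^{-1}(u)=u$, so $\log(\phi^0)^{-1}(\phi_0(|x(n)|/\|x\|))=\log\phi_0(|x(n)|/\|x\|)$, and since $\phi_0(t)$ is quadratic-like this $\Phi^0$ is essentially the Kalton--Peck map on $\ell_2$, which is decidedly nontrivial. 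The legitimate reduction to two ``directions'' comes only from $\sum_j\psi_j'(0)=0$, i.e.\ one may subtract a common reference and work with differences of the functions $\log(\phi^j)^{-1}$; this is what produces the paper's quantities $A_n$ (coefficients $(1,-2,1)$) and $B_n$ (coefficients $(-1,0,1)$) in Lemma \ref{lem:AnBn}. Second, your asymptotic input is not that of the displayed functions: from the formula, $\phi^2(t)=t^2/25+O(t^3)$ near $0$, so $\log(\phi^2)^{-1}(s)=\tfrac12\log s+O(1)$, not $c\sqrt{|\log s|}$; likewise the claim that $X_0=\ell_2$ ``one checks by substitution'' is not a routine verification, since $(\phi^1)^{-1}$ and $(\phi^2)^{-1}$ are not explicit --- the paper has to manufacture the two-sided bound \eqref{eq:inequalities} through the Young-conjugate construction behind $\phi^2$, and this is a substantial step, not an afterthought.

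The deeper gap is logical: Lemma \ref{lem:realcent} requires ruling out (i) triviality of $\Omega_1$, (ii) triviality of $\Omega_2$, and (iii) equivalence of $\Omega_1$ with $\gamma\Omega_2$ for \emph{every} admissible scalar $\gamma$. Pairwise projective non-equivalence of $\Phi^1$ and $\Phi^2$, plus nontriviality of each, does not yield this: since $\Omega_1$ and $\Omega_2$ are zero-sum linear combinations of the three generating functions, what you actually need is that no nontrivial such combination is trivial, i.e.\ a linear-independence-modulo-boundedness statement. You flag this passage as ``the delicate part'' but do not supply it, and it is precisely the content of the paper's final lemmas: $|A_n|$ is unbounded, and $|\gamma A_n-B_n|$ is unbounded for every real $\gamma$, proved by a case analysis in $\gamma$ using \eqref{eq:inequalities} and the explicit form of $\phi^1$, after Lemma \ref{lem:almostthere} converts triviality/equivalence of the $\Omega_i$ into boundedness of these sequences (using that $\Omega_0(e_n)=ae_n$ forces the linear perturbation to be given by a bounded multiplier). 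As written, your proposal reproduces the paper's strategy but omits its two essential computations (the identification of $X_0$ and the unboundedness estimates), and the asymptotics you propose to base them on are inconsistent with the functions in the statement, so the argument does not yet close.
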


We may check that $\phi^1$ and $\phi^2$ are Orlicz functions on a neighborhood of $0$ by differentiation. One may also check that both satisfy the $\Delta_2$ condition at 0. We leave the bothersome details to the reader.

Recall that two Orlicz functions $\phi$ and $\psi$ satisfying the $\Delta_2$-condition are \emph{equivalent}, written $\phi \sim \psi$, if there are $K > 0$, $t_0 > 0$ such that for every $0 \leq t \leq t_0$
\begin{equation}\label{eq:equiv}
K^{-1} \leq \frac{\phi(t)}{\psi(t)} \leq K
\end{equation}
Equivalently,
\[
\phi^{-1}(K^{-1}\psi(t)) \leq t \leq \phi^{-1}(K\psi(t))
\]
for $0 \leq t \leq t_0$, which happens if and only if $\ell_{\phi} = \ell_{\psi}$ with equivalence of norms.

\begin{props}
Under the conditions of Theorem \ref{thm:example}, $X_0 = \ell_2$ with equivalence of norms.
\end{props}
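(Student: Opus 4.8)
\emph{The plan is the following.} Since $0$ is the centre of $\D$, harmonic measure $\mu_0$ on $\T$ is normalised arc length, so that $\mu_0(A_0)=\mu_0(A_1)=\mu_0(A_2)=\tfrac13$. The family $\mathcal{H}$ is a finite admissible family of Orlicz sequence spaces with non-degenerate $\phi^0,\phi^1,\phi^2$ (admissibility is routine because the family is finite), so I would invoke Theorem~\ref{thm:interpolationfiniteOrlicz} to get $X_0=\ell_{\phi_0}$ with equivalence of norms, where on a neighbourhood of $0$
\[
\phi_0^{-1}(t)=\big((\phi^0)^{-1}(t)\,(\phi^1)^{-1}(t)\,(\phi^2)^{-1}(t)\big)^{1/3}.
\]
Both $\phi_0$ and $t\mapsto t^2$ satisfy the $\Delta_2$-condition at $0$, so by \eqref{eq:equiv} the equality $\ell_{\phi_0}=\ell_2$ with equivalence of norms is equivalent to $\phi_0(t)\asymp t^2$, i.e.\ to $\phi_0^{-1}(t)\asymp t^{1/2}$, near $0$. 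Hence the whole statement reduces to showing
\[
(\phi^0)^{-1}(t)\,(\phi^1)^{-1}(t)\,(\phi^2)^{-1}(t)\asymp t^{3/2}\qquad(t\to 0^{+}).
\]

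Next I would compute the three inverses near $0$. As $\phi^0$ is the identity, $(\phi^0)^{-1}(t)=t$. For $\phi^1$, solving $5^{-2}s^2|\log s|^4=t$ and taking positive square roots gives $s\,|\log s|^2=5\,t^{1/2}$; substituting this into $\log(1/s)$ and iterating once yields $\log(1/s)=\tfrac12\log(1/t)+O(\log\log(1/t))$, whence $(\phi^1)^{-1}(t)\asymp t^{1/2}\,(\log(1/t))^{-2}$. For $\phi^2$, the change of variables $w=\sqrt{1-s}$ rewrites $\phi^2(s)$ as $\tfrac{4}{25}\,e^{2w-2}w^2(1-w)^2$, so $\phi^2(s)=t$ becomes $e^{w-1}w(1-w)=\tfrac52\,t^{1/2}$; solving this for $w$, and hence for $s=1-w^2$, near $w=1$ gives the asymptotics of $(\phi^2)^{-1}(t)$. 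Then I would multiply the three expressions: the choice of $\phi^1$ and $\phi^2$ should be such that their logarithmic and sub-polynomial lower-order factors cancel in the weighted geometric mean, leaving something comparable to $t^{3/2}$. From this $\phi_0(t)\asymp t^2$ near $0$, so $\ell_{\phi_0}=\ell_2$ and $X_0=\ell_2$ with equivalence of norms.

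The delicate point is this last step. One has to perform the inversions of $\phi^1$ and — more awkwardly — of $\phi^2$ with enough precision to control the lower-order terms, and then check that in $\big((\phi^0)^{-1}(\phi^1)^{-1}(\phi^2)^{-1}\big)^{1/3}$ these terms cancel up to a bounded multiplicative constant. A comparison valid only up to a slowly varying factor would not be enough: for Orlicz \emph{sequence} spaces, identifying $\ell_{\phi_0}$ with $\ell_2$ requires the genuine two-sided constants of \eqref{eq:equiv}, so every logarithmic factor must be accounted for exactly.
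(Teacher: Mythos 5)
Your reduction coincides with the paper's: since $\mu_0(A_j)=\tfrac13$, Theorem~\ref{thm:interpolationfiniteOrlicz} reduces the proposition to the two-sided estimate $(\phi^0)^{-1}(t)\,(\phi^1)^{-1}(t)\,(\phi^2)^{-1}(t)\asymp t^{3/2}$ near $0$ (this is exactly the content of \eqref{eq:musthave}--\eqref{eq:inequalities} in the paper), and your computations $(\phi^0)^{-1}(t)=t$ and $(\phi^1)^{-1}(t)\asymp t^{1/2}(\log(1/t))^{-2}$ are correct. But the proposal stops precisely where the proposition lives: the two-sided bound is never proved, only the hope that the logarithmic factors ``should'' cancel is recorded. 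Worse, the computation you set up points the other way. In your substitution $w=\sqrt{1-s}$, solving $e^{w-1}w(1-w)=\tfrac52 t^{1/2}$ near $w=1$ gives $1-w\sim\tfrac52 t^{1/2}$, hence $s=1-w^2\sim 5t^{1/2}$; equivalently $\phi^2(s)\sim s^2/25$ as $s\to0^+$, so $(\phi^2)^{-1}(t)\sim 5t^{1/2}$ carries \emph{no} logarithmic factor. Multiplying your three asymptotics then yields a product $\asymp t^{2}(\log(1/t))^{-2}$ rather than $\asymp t^{3/2}$: the cancellation you are counting on does not come out of this expansion, and no more precise iteration of the same inversion will manufacture the $(\log(1/t))^{2}$ factor needed to offset $(\phi^1)^{-1}$.

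The paper's argument is structurally different, and this is the missing idea: it never inverts $\phi^2$ asymptotically. It uses that $\phi^1=5^{-2}(\psi^1)^2$ and $\phi^2=5^{-2}(\psi^2)^2$, where $\psi^1(t)=t|\log t|^2$ and $\psi^2$ is its Young-type conjugate $\psi^2(s)=\sup_t\,(ts-\psi^1(t))$. The upper half of \eqref{eq:inequalities} follows from Young's inequality $ts\le\psi^1(t)+\psi^2(s)$, which gives $(\psi^1)^{-1}(u)(\psi^2)^{-1}(u)\le 2u$, and the lower half from the pointwise verification that $\psi^2\bigl(\psi^1(t)/(5t)\bigr)\le\psi^1(t)$, a check the paper performs in the regime $t$ near $1$, not in the small-$s$ regime your expansion uses. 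Note that your expansion of the displayed formula for $\phi^2$ produces an estimate incompatible with \eqref{eq:inequalities}, so any completed proof along your lines must confront and resolve this discrepancy (which branch or regime of $\psi^1,\psi^2$ is actually relevant to the behaviour of $\phi^2$ near $0$), or else exhibit asymptotics for $(\phi^2)^{-1}$ genuinely containing the compensating $(\log(1/t))^{2}$. As written, the proposal establishes only the routine reduction and leaves the substantive estimate unproved.
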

\begin{proof}
$X_0 = \ell_2$ with equivalence of norms if and only if there are $K > 0$ and $t_0 > 0$ such that for every $0 < t \leq t_0$ 
\begin{eqnarray*}
&& ((\phi^0)^{-1}(K^{-1}t^2))^{\frac{1}{3}}((\phi^1)^{-1}(K^{-1}t^2))^{\frac{1}{3}}((\phi^2)^{-1}(K^{-1}t^2))^{\frac{1}{3}} \\
&& \leq t \\
&&\leq ((\phi^0)^{-1}(Kt^2))^{\frac{1}{3}}((\phi^1)^{-1}(Kt^2))^{\frac{1}{3}}((\phi^2)^{-1}(Kt^2))^{\frac{1}{3}} 
\end{eqnarray*}

Since $\phi^0(t) = t$,
\begin{eqnarray*}
&& K^{-\frac{1}{3}}t^{\frac{2}{3}}((\phi^1)^{-1}(K^{-1}t^2))^{\frac{1}{3}}((\phi^2)^{-1}(K^{-1}t^2))^{\frac{1}{3}} \\
&& \leq t \\
&& \leq K^{-\frac{1}{3}}t^{\frac{2}{3}}((\phi^1)^{-1}(Kt^2))^{\frac{1}{3}}((\phi^2)^{-1}(Kt^2))^{\frac{1}{3}} 
\end{eqnarray*}
that is
\[
((\phi^1)^{-1}(K^{-1}t^2))^{\frac{1}{3}}((\phi^2)^{-1}(K^{-1}t^2))^{\frac{1}{3}} \leq K^{\frac{1}{3}} t^{\frac{1}{3}} \leq ((\phi^1)^{-1}(Kt^2))^{\frac{1}{3}}((\phi^2)^{-1}(Kt^2))^{\frac{1}{3}} 
\]
so
\[
((\phi^1)^{-1}(K^{-1}t^2))^{\frac{1}{2}}((\phi^2)^{-1}(K^{-1}t^2))^{\frac{1}{2}} \leq K^{\frac{1}{2}} t^{\frac{1}{2}} \leq ((\phi^1)^{-1}(Kt^2))^{\frac{1}{2}}((\phi^2)^{-1}(Kt^2))^{\frac{1}{2}} 
\]
Taking $u = K^{\frac{1}{2}} t^\frac{1}{2}$
\begin{equation}\label{eq:musthave}
((\phi^1)^{-1}(K^{-2}u^4))^{\frac{1}{2}}((\phi^2)^{-1}(K^{-2}u^4))^{\frac{1}{2}} \leq u \leq ((\phi^1)^{-1}(u^4))^{\frac{1}{2}}((\phi^2)^{-1}(u^4))^{\frac{1}{2}} 
\end{equation}

We must show that \eqref{eq:musthave} is satisfied. Take $\psi^1(t) = t \left|\log(t)\right|^2$. From now on we will use the notation for the inverse of a function when the function is injective on a neighborhood of 0. Define $\psi^2 = (\psi^1)^*$ as if $\psi^1$ was an Orlicz function. So
\[
\psi^2(s) = \max_{t \geq 0} (ts - \psi^1(t)) = \max_{t \geq 0} t(s - \log(t)^2)
\]

Let $f_s(t) = t(s - \log(t)^2)$. Then $f_s'(t) = s - \log(t)^2 - 2\log(t)$, which is 0 if and only if $t = e^{-1 \pm \sqrt{1 - s}}$ for $0 \leq s < 1$. Then
\[
\psi^2(s) = e^{-1 + \sqrt{1 - s}} (s - (-1 + \sqrt{1 - s})^2)
\]
for $0 \leq s < 1$. This function is increasing on a neighborhood of 0. By definition $ts \leq \psi^1(t) + \psi^2(s)$, so
\[
(\psi^1)^{-1}(t)(\psi^2)^{-1}(t) \leq 2t
\]

Let $\xi^i = (\psi^i)^2$. Then $(\xi^i)^{-1}(t) = (\psi^i)^{-1}(t^{\frac{1}{2}})$ and $(\xi^1)^{-1}(t) (\xi^2)^{-1}(t) \leq 2 t^{\frac{1}{2}}$. It follows that $(\xi^1)^{-1}(t^4)^{\frac{1}{2}} (\xi^2)^{-1}(t^4)^{\frac{1}{2}} \leq \sqrt{2} t$. Write $t = \frac{u}{\sqrt{10}}$, so that
\[
(\xi^1)^{-1}(\frac{u^4}{100})^{\frac{1}{2}} (\xi^2)^{-1}(\frac{u^4}{100})^{\frac{1}{2}} \leq \frac{u}{\sqrt{5}}
\]

For $t$ near 1 from below, the function $f(t) = \psi^2(\frac{\psi^1(t)}{5t}) - \psi^1(t)$ is negative. This implies, taking $\psi^1(t) = u$, that $\frac{u}{5} \leq (\psi^1)^{-1}(u) (\psi^2)^{-1}(u)$ for $u$ small enough. In particular $\frac{u}{\sqrt{5}} \leq (\xi^1)^{-1}(u^4)^{\frac{1}{2}}(\xi^2)^{-1}(u^4)^{\frac{1}{2}}$. So
\[
(\xi^1)^{-1}(\frac{u^4}{100})^{\frac{1}{2}} (\xi^2)^{-1}(\frac{u^4}{100})^{\frac{1}{2}} \leq \frac{u}{\sqrt{5}} \leq (\xi^1)^{-1}(u^4)^{\frac{1}{2}}(\xi^2)^{-1}(u^4)^{\frac{1}{2}}
\]

Noticing that $\phi^i = 5^{-2}\xi^i$ and writing $\frac{u}{\sqrt{5}} = t$, we obtain
\begin{equation}\label{eq:inequalities}
(\phi^1)^{-1}(\frac{t^4}{100})^{\frac{1}{2}} (\phi^2)^{-1}(\frac{t^4}{100})^{\frac{1}{2}} \leq t \leq (\phi^1)^{-1}(t^4)^{\frac{1}{2}}(\phi^2)^{-1}(t^4)^{\frac{1}{2}}
\end{equation}
\end{proof}

Let $\Omega_0 \sim \Omega_1 + i \Omega_2$ with $\Omega_1$ and $\Omega_2$ real. Kalton shows in \cite[Lemma 7.1]{KaltonKothe} that we may take
\begin{equation}\label{eq:Omega1def}
\Omega_1(x) = Re(\Omega_0(Re(x))) + i Re(\Omega_0(Im(x)))
\end{equation}
and
\begin{equation}\label{eq:Omega2def}
\Omega_2(x) = Im(\Omega_0(Re(x))) + i Im(\Omega_0(Im(x)))
\end{equation}

\begin{lemma}\label{lem:AnBn}
For $n \in \N$ let $s_n \in \ell_2$ be the unit vectors given by $s_n = \frac{1}{\sqrt{n}} \sum_{j=1}^n e_j$. Under the conditions of Theorem \ref{thm:example}, if we write $\Omega_0 \sim$ $\Omega_1 + i\Omega_2$ with $\Omega_1$ and $\Omega_2$ as in \eqref{eq:Omega1def} and \eqref{eq:Omega2def} then $\Omega_1(s_n) = \frac{\sqrt{3}}{2\pi} A_n s_n$ and $\Omega_2(s_n) = \frac{3}{2\pi} B_n s_n$, where $A_n = \log\Big(\frac{(\phi^0)^{-1}(1/\sqrt{n}) (\phi^2)^{-1}(1/\sqrt{n})}{((\phi^1)^{-1}(1/\sqrt{n}))^2}\Big)$ and $B_n = \log\Big( \frac{(\phi^2)^{-1}(1/\sqrt{n})}{(\phi^0)^{-1}(1/\sqrt{n})}\Big)$.
\end{lemma}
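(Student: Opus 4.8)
The plan is to apply the explicit formula for the induced centralizer from Theorem~\ref{thm:interpolationfiniteOrlicz} to the specific family $\mathcal{H}$ and the specific vectors $s_n$. Since $s_n$ is a real vector with all nonzero coordinates equal to $1/\sqrt{n}$, the formulas \eqref{eq:Omega1def} and \eqref{eq:Omega2def} reduce to $\Omega_1(s_n) = \mathrm{Re}(\Omega_0(s_n))$ and $\Omega_2(s_n) = \mathrm{Im}(\Omega_0(s_n))$, so the whole computation comes down to evaluating $\Omega_0(s_n)$. By Theorem~\ref{thm:interpolationfiniteOrlicz}, $\Omega_0(x) = x \sum_{j=0}^{2} \psi_j'(0) \log \phi^j{}^{-1}(\phi_0(|x(k)|/\|x\|))$, where $\psi_j$ is analytic on $\D$ with $\mathrm{Re}\,\psi_j = \chi_{A_j}$ on $\T$. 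First I would record that $X_0 = \ell_2$ with equivalence of norms (the preceding proposition), and absorb the norm constant: because a bounded linear perturbation does not change the centralizer up to equivalence, and because the $\log\phi^j{}^{-1}$ terms only shift by bounded (in fact linear in $\log\|x\|$, hence of the form $M_f$) amounts when $\|x\|$ changes by a bounded factor, I may as well compute with the ``honest'' value $\|s_n\|_{\ell_2} = 1$, i.e. evaluate at $\phi_0(1/\sqrt{n})$; the statement's $A_n, B_n$ are written precisely in this normalization, so the constants work out. Since $\phi^0(t) = t$ on a neighborhood of $0$, we have $\phi_0(1/\sqrt n) = (\phi^0_{\mathrm{interp}})(1/\sqrt n)$ but more usefully $\phi^0{}^{-1}(s) = s$, and for $n$ large $1/\sqrt n$ lies in the relevant neighborhood.

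Next I would compute the coefficients $\psi_j'(0)$. Harmonic measure at $0$ is normalized Lebesgue measure, so $\mu_0(A_j) = 1/3$ for each $j$, which gives $\phi_0^{-1} = \prod_j (\phi^j{}^{-1})^{1/3}$ and is what underlies the previous proposition. For $\psi_j$: the harmonic extension of $\chi_{A_j}$ has value $1/3$ at $0$, and $\psi_j'(0)$ is computed from the Fourier/Herglotz representation $\psi_j(z) = \frac{1}{2\pi}\int \frac{e^{it}+z}{e^{it}-z}\chi_{A_j}(e^{it})\,dt + i c_j$, so $\psi_j'(0) = \frac{1}{\pi}\int_{A_j} e^{-it}\,dt$. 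With $A_0 = [0, 2\pi/3)$, $A_1 = [2\pi/3, 4\pi/3)$, $A_2 = [4\pi/3, 2\pi)$ I would evaluate these three integrals: $\int_{A_j} e^{-it}dt = i(e^{-i\beta_j} - e^{-i\alpha_j})$ over the arc $[\alpha_j,\beta_j)$. A direct computation gives $\psi_0'(0) = \frac{1}{\pi}\cdot i(e^{-2\pi i/3} - 1)$, and similarly for $j=1,2$; collecting real and imaginary parts of $\sum_j \psi_j'(0)\log\phi^j{}^{-1}(1/\sqrt n)$ and writing $\Omega_0(s_n) = s_n \cdot(\text{scalar})$ should produce exactly $\mathrm{Re} = \frac{\sqrt 3}{2\pi} A_n$ and $\mathrm{Im} = \frac{3}{2\pi} B_n$ after using $\sum_j \psi_j'(0) = 0$ (since $\sum_j \chi_{A_j} = 1$ is constant, whose analytic completion is constant) to eliminate the $\log\phi^0{}^{-1}$ term against a combination of the others — this is what makes $A_n$ and $B_n$ come out as ratios rather than three separate logs.

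The main obstacle I anticipate is purely bookkeeping: correctly pinning down the additive constants. The centralizer formula involves $\phi_0(|x(k)|/\|x\|_{\ell_{\phi_z}})$ with $\|\cdot\|_{\ell_{\phi_z}}$ the \emph{Orlicz} norm of the interpolated function, not the $\ell_2$ norm, and these differ by a bounded factor; moreover Theorem~\ref{thm:interpolationfiniteOrlicz} carries an implicit $a$ (here $a=1$, since the witness is $+\infty$ for non-degenerate $\phi^j$) and the passage from $\|s_n\|_{\ell_{\phi_0}}$ to $1$ introduces a term of the form $M_f(s_n)$ with $f$ constant on the support, i.e. a scalar multiple of $s_n$ — which is exactly the kind of term that can be absorbed since we only claim the \emph{equality} $\Omega_1(s_n) = \frac{\sqrt 3}{2\pi}A_n s_n$ up to the representatives $\Omega_1,\Omega_2$ fixed by \eqref{eq:Omega1def}–\eqref{eq:Omega2def}. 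I would therefore be careful to check that the normalization in the definitions of $A_n$ and $B_n$ (which use $\phi^j{}^{-1}(1/\sqrt n)$, i.e. argument $1 = \|s_n\|_{\ell_2}^2 \cdot$ nothing, really $\phi_0(\text{coordinate})$ with coordinate $1/\sqrt n$) matches the formula's $\phi_0(|x(k)|/\|x\|)$ after clearing constants, and to confirm the elementary fact that $\mathrm{Re}$ and $\mathrm{Im}$ of a real vector times a complex scalar split as claimed. Everything else is the trigonometric evaluation of three arc-integrals, which I would present compactly.
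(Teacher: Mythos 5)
Your approach is essentially the paper's own proof: the paper plugs the piecewise-constant factorization map $B_0(s_n)(j,w)=\phi_w^{-1}\big(\phi_0(1/\sqrt n)\big)$ into Proposition \ref{props:inducedcent} at $z=0$, and the three arc integrals it evaluates are exactly your $\psi_j'(0)=\frac{1}{\pi}\int_{A_j}e^{-it}\,dt$, after which the real/imaginary split for the real vector $s_n$ yields the stated formulas. The normalization point you flag (the argument $\phi_0\big(|x(k)|/\|x\|_{\ell_{\phi_0}}\big)$ versus the literal $1/\sqrt n$ appearing in $A_n,B_n$) is handled with the same looseness in the paper itself and is harmless for the later unboundedness lemmas, so your proposal matches the paper's argument.
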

\begin{proof}
By the proofs of Theorems \ref{thm:interpolationfamilies} and \ref{thm:interpolationfiniteOrlicz}, we may take a factorization map $B_0$ satisfying $B_0(s_n)(j, w) = \phi_w^{-1}(\phi_z(\frac{1}{\sqrt{n}}))$ for $1 \leq j \leq n$, and 0 otherwise.

Then, for $1 \leq j \leq n$,
\begin{eqnarray*}
\Omega_0(s_n)(j) & = & \frac{s_n(j)}{\pi} \int_{-\pi}^{\pi} \frac{1}{e^{it}} \log B_0(s_n)(j, e^{it}) dt \\
        & = & \frac{1}{{2\pi}\sqrt{n}} ((\sqrt{3} - 3i) \log (\phi^0)^{-1}(\frac{1}{\sqrt{n}}) - 2\sqrt{3} \log (\phi^1)^{-1}(\frac{1}{\sqrt{n}}) \\
         && + (\sqrt{3} + 3i) \log (\phi^2)^{-1}(\frac{1}{\sqrt{n}})) \\
        & = & \frac{1}{2\pi\sqrt{n}}(\sqrt{3} \log\Big(\frac{(\phi^0)^{-1}(\frac{1}{\sqrt{n}}) (\phi^2)^{-1}(\frac{1}{\sqrt{n}})}{((\phi^1)^{-1}(\frac{1}{\sqrt{n}}))^2}\Big) + 3 i \log\Big( \frac{(\phi^2)^{-1}(\frac{1}{\sqrt{n}})}{(\phi^0)^{-1}(\frac{1}{\sqrt{n}})}\Big))
\end{eqnarray*}
and 0 otherwise. The result now follows from \eqref{eq:Omega1def} and \eqref{eq:Omega2def}.
\end{proof}

Since $X_0 = \ell_2$ with equivalence of norms, there are $k, K > 0$ such that
\[
\frac{k}{n} \leq \phi_0(\frac{1}{\sqrt{n}}) \leq \frac{K}{n}
\]

\begin{lemma}\label{lem:almostthere}
\begin{enumerate}
    \item If $\Omega_1$ is trivial then there is $C > 0$ such that $\left|A_n\right| \leq C$ for every $n$.
    \item If there is $\gamma$ such that $\Omega_1$ is equivalent to $\gamma \Omega_2$ then there is $C > 0$ such that $\left|\gamma A_n - B_n\right| \leq C$ for every $n$.
\end{enumerate}
\end{lemma}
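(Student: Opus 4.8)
The plan is to feed the bounded maps supplied by the two hypotheses to the normalized blocks $s_n$ and to the single coordinate vectors $e_j$: the vectors $e_j$ will control the ``diagonal part'' of the relevant linear correction, while the $s_n$, via Lemma~\ref{lem:AnBn} and homogeneity, will isolate $A_n$, respectively $\gamma A_n-B_n$.

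First I would record two auxiliary facts. Since $X_0=\ell_2$ with equivalence of norms, $\Omega_0,\Omega_1,\Omega_2$ are centralizers on $\ell_2$, and Lemma~\ref{lem:AnBn} gives $\Omega_1(s_n)=\frac{\sqrt3}{2\pi}A_n s_n$ and $\Omega_2(s_n)=\frac{3}{2\pi}B_n s_n$ with $A_n,B_n\in\mathbb{R}$. Next, for the factorization map $B_0$ used in the proofs of Theorems~\ref{thm:interpolationfamilies} and~\ref{thm:interpolationfiniteOrlicz}, one checks that $B_0(e_j)(n,w)$ vanishes for $n\neq j$ and, at $n=j$, equals a function of $w$ not depending on $j$ (because $\|e_j\|_{\ell_{\phi_0}}$ is the same for every $j$); hence Proposition~\ref{props:inducedcent} yields $\Omega_0(e_j)=\mu e_j$ for a fixed $\mu\in\C$, and then \eqref{eq:Omega1def}--\eqref{eq:Omega2def} give $\Omega_1(e_j)=\mu_1 e_j$ and $\Omega_2(e_j)=\mu_2 e_j$ with $\mu_1=\mathrm{Re}\,\mu$, $\mu_2=\mathrm{Im}\,\mu$ fixed real numbers.

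For~(1): triviality of $\Omega_1$ means there is a linear map $L$ with $\Omega_1-L$ bounded on $\ell_2$, and for centralizers $L$ may be taken to be a multiplication operator $M_a$, $a\in\C^{\N}$ (this is the form in which triviality is used in the proof of Lemma~\ref{lem:realcent}), i.e. $\|\Omega_1(x)-ax\|_2\le C_0\|x\|_2$ for all $x$. Testing on $e_j$ gives $|\mu_1-a(j)|\le C_0$, hence $|a(j)|\le|\mu_1|+C_0=:M$ for all $j$, so that $\|a s_n\|_2=\big(\tfrac1n\sum_{j=1}^{n}|a(j)|^2\big)^{1/2}\le M$. Testing on $s_n$ and using the triangle inequality, $\frac{\sqrt3}{2\pi}|A_n|=\|\Omega_1(s_n)\|_2\le\|\Omega_1(s_n)-as_n\|_2+\|as_n\|_2\le C_0+M$, a uniform bound on $|A_n|$. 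For~(2) the hypothesis similarly yields $a\in\C^{\N}$ and $C_0>0$ with $\|\Omega_1(x)-\gamma\Omega_2(x)-ax\|_2\le C_0\|x\|_2$; testing on $e_j$ bounds $a$ (now $|a(j)|\le|\mu_1-\gamma\mu_2|+C_0$), and testing on $s_n$ then bounds $\|\Omega_1(s_n)-\gamma\Omega_2(s_n)\|_2=\frac1{2\pi}\big|\sqrt3\,A_n-3\gamma B_n\big|$ uniformly in $n$; dividing by $3\gamma$ (the case $\gamma=0$ is case~(1)) and absorbing the numerical factors of Lemma~\ref{lem:AnBn} into $\gamma$ gives $|\gamma A_n-B_n|\le C$.

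The one non-formal point, in both parts, is the reduction to a commutator with a \emph{multiplication} operator rather than with an arbitrary linear map: if $\Omega_1-L$ were merely bounded for some linear $L:\ell_2\to\C^{\N}$, evaluating on $s_n$ alone would only control an average of the ``coordinates'' of $L$ over $\{1,\dots,n\}$ and not $|A_n|$ itself. It is precisely the diagonal reduction (valid for centralizers, and already invoked implicitly in Lemma~\ref{lem:realcent}) together with the evaluation on the $e_j$'s that upgrades this averaged control to the pointwise bounds above; everything else is homogeneity and the triangle inequality.
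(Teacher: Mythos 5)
Your proof is correct and follows essentially the same route as the paper: choose the factorization map so that $\Omega_0(e_j)=\mu e_j$ with $\mu$ independent of $j$, use triviality (resp.\ equivalence to $\gamma\Omega_2$) in the form ``$\Omega_1-M_f$ bounded'' to deduce $f\in\ell_\infty$ by testing on the $e_j$, and then apply the triangle inequality on $s_n$ together with Lemma~\ref{lem:AnBn}. The reduction of the linear correction to a multiplication operator, which you rightly flag as the only non-formal point, is exactly the (unproved) assumption the paper itself makes, and your remark about renaming $\gamma$ is harmless since the subsequent lemma treats all real $\gamma$.
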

\begin{proof}
(1) If $\Omega_1$ is trivial there is a sequence $f \in \C^{\N}$ such that $\Omega_1 - M_{f}$ is bounded.

For a given Orlicz function space $\ell_{\phi}$ the norm of $e_n$ is independent of $n$. This implies that we may choose $B_0$ in such a way that $\Omega_0(e_n) = a e_n$ for some $a \in \C$ and every $n$. Since $\Omega_1 - M_{f}$ is bounded, $f \in \ell_{\infty}$.

So, for every $n$,
\[
\|\Omega_0(s_n)\|_2 \leq \|\Omega_0(s_n) - f(s_n)\|_2 + \|f(s_n)\|_2 \leq C\|s_n\|_2
\]

The result follows from Lemma \ref{lem:AnBn}. (2) is proved similarly.
\end{proof}

Theorem \ref{thm:example} follows from Lemma \ref{lem:realcent}, Lemma \ref{lem:almostthere} and the next two results.

\begin{lemma}
The sequence $(\left|A_n\right|)_n$ of Lemma \ref{lem:AnBn} is not bounded.
\end{lemma}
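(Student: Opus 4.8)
The plan is to compute the asymptotic behaviour of $A_n = \log\big((\phi^0)^{-1}(n^{-1/2})(\phi^2)^{-1}(n^{-1/2})/((\phi^1)^{-1}(n^{-1/2}))^2\big)$ and show it tends to $+\infty$ (or $-\infty$) along a subsequence. Writing $s = n^{-1/2} \to 0$, I need asymptotic expansions of the three inverse functions near $0$. First, $(\phi^0)^{-1}(s) = s$ trivially. For $\phi^1(t) = 5^{-2} t^2 |\log t|^4$, inverting $\phi^1(t) = s$ gives $t \approx 5 s^{1/2} / |\log s|^2$ up to lower-order corrections, so $(\phi^1)^{-1}(s) \sim 5 s^{1/2} |\log s|^{-2}$; a short bootstrapping argument (substitute the first approximation back into $|\log t|$ and note $\log t \sim \tfrac12 \log s$) makes this precise. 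For $\phi^2$, which is essentially $5^{-2} (\psi^1)^*$ squared evaluated via the Legendre-transform computation already in the text, the key is that $\psi^2(s) = e^{-1+\sqrt{1-s}}(s - (-1+\sqrt{1-s})^2)$ behaves near $0$ like a constant times $s^2$ after simplification — expanding $\sqrt{1-s} = 1 - s/2 - s^2/8 + \cdots$ shows $s - (-1+\sqrt{1-s})^2 \sim s^2/2$ and $e^{-1+\sqrt{1-s}} \sim e^{-s/2}$, so $\psi^2(s) \sim s^2/2$ and hence $(\psi^2)^{-1}(s) \sim \sqrt{2s}$, giving $(\phi^2)^{-1}(s) = \big((\xi^2)^{-1}(s)\big)$ with $\xi^2 = (\psi^2)^2$, i.e. $(\phi^2)^{-1}(s) \sim 5 (2 s)^{1/2} \cdot$(correction). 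The more delicate point is that $\psi^2$ also carries a logarithmic factor hidden in the inversion of $f_s$; in fact since $\psi^2 = (\psi^1)^*$ and $\psi^1(t) = t|\log t|^2$, the pairing $(\psi^1)^{-1}(s)(\psi^2)^{-1}(s) \asymp s$ forces $(\psi^2)^{-1}(s) \asymp s / (\psi^1)^{-1}(s) \asymp s^{1/2} |\log s|^2$, so $(\phi^2)^{-1}(s) \asymp s^{1/2} |\log s|^2$.

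Plugging these into $A_n$: with $s = n^{-1/2}$,
\[
\frac{(\phi^0)^{-1}(s)\,(\phi^2)^{-1}(s)}{\big((\phi^1)^{-1}(s)\big)^2} \asymp \frac{s \cdot s^{1/2}|\log s|^2}{\big(s^{1/2}|\log s|^{-2}\big)^2} = \frac{s^{3/2}|\log s|^2}{s |\log s|^{-4}} = s^{1/2}|\log s|^6 \to 0,
\]
so $A_n \to -\infty$, and in particular $(|A_n|)_n$ is unbounded. The proof is therefore: (i) establish the three asymptotic estimates, (ii) combine them into the displayed chain, (iii) conclude unboundedness. I would present (i) as a short lemma or inline computation, being careful that only $\asymp$ (two-sided bounds up to constants and bounded factors) is needed, not sharp constants, because $A_n$ only needs its absolute value to blow up.

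The main obstacle is the asymptotics of $(\phi^2)^{-1}$, because $\phi^2$ is defined by the messy closed form $5^{-2} e^{-2+2\sqrt{1-t}}(2t + 2\sqrt{1-t} - 2)^2$ rather than directly. The cleanest route, which the paper has already set up, is to avoid differentiating this expression and instead use that $\phi^2 = 5^{-2}\xi^2$ with $\xi^2 = (\psi^2)^2$ and $\psi^2 = (\psi^1)^*$, together with the Young-inequality bound $(\psi^1)^{-1}(s)(\psi^2)^{-1}(s) \le 2s$ and the matching lower bound $\tfrac{s}{5} \le (\psi^1)^{-1}(s)(\psi^2)^{-1}(s)$ established in the proof of the preceding proposition. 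Combined with $(\psi^1)^{-1}(s) \asymp s^{1/2}|\log s|^{-2}$ (from inverting $\psi^1(t) = t|\log t|^2$), this pins down $(\psi^2)^{-1}(s) \asymp s^{1/2}|\log s|^2$ without ever touching the explicit formula, and then $(\phi^2)^{-1}(s) = 5\,(\psi^2)^{-1}(s) \asymp s^{1/2}|\log s|^2$. Everything else is routine bookkeeping with logarithms.
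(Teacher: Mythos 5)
Your route---two-sided asymptotics for $(\phi^0)^{-1}$, $(\phi^1)^{-1}$, $(\phi^2)^{-1}$ plugged into $A_n$---is genuinely different from the paper's, which never needs the behaviour of $(\phi^2)^{-1}$ at all: with $u_n^4=n^{-1/2}$ the paper uses only the right-hand half of \eqref{eq:inequalities}, which gives $(\phi^2)^{-1}(u_n^4)\ge u_n^2/(\phi^1)^{-1}(u_n^4)$ and hence $A_n\ge\log\big(u_n^6/((\phi^1)^{-1}(u_n^4))^3\big)$; if this were bounded above one would get $\phi^1(u_n^2e^{-C/3})\le u_n^4$ for large $n$, which the factor $\left|\log\right|^4$ in $\phi^1$ forbids. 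So the paper needs only the explicit form of $\phi^1$ and obtains unboundedness of $A_n$ \emph{from above}, while you conclude $A_n\to-\infty$. The two cannot both hold, and your proposal leans on exactly the ingredient (the product bounds behind \eqref{eq:inequalities}) that produces the opposite sign, so this clash has to be resolved rather than passed over; as written the proposal is internally inconsistent and is not a proof.

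The concrete problems are these. (a) The expansion of $\psi^2$ is wrong: $(-1+\sqrt{1-s})^2=s^2/4+O(s^3)$, so $s-(-1+\sqrt{1-s})^2=s+O(s^2)$ and $\psi^2(s)\sim s$, not $s^2/2$; done correctly, the closed formula gives $\phi^2(t)\sim t^2/25$, i.e.\ $(\phi^2)^{-1}(s)\asymp s^{1/2}$ with no logarithmic factor. (b) In your ``clean route'' you invert the wrong function: $\psi^1(t)=t\left|\log t\right|^2$ gives $(\psi^1)^{-1}(s)\asymp s\left|\log s\right|^{-2}$, whereas $s^{1/2}\left|\log s\right|^{-2}$ is the inverse of $\xi^1=(\psi^1)^2$ (i.e.\ of $25\phi^1$); with the correct inverse, the two-sided relation $(\psi^1)^{-1}(s)(\psi^2)^{-1}(s)\asymp s$ would force $(\psi^2)^{-1}(s)\asymp\left|\log s\right|^2$, which does not tend to $0$ --- a signal that the lower Young-type bound cannot be invoked here: $\psi^1$ is concave on a neighbourhood of $0$, so only the upper bound $(\psi^1)^{-1}(s)(\psi^2)^{-1}(s)\le 2s$ is safe, and ``pinning down'' $(\psi^2)^{-1}$ from the product is unsound. (c) The identity $(\phi^2)^{-1}(s)=5(\psi^2)^{-1}(s)$ is false; from $\phi^2=5^{-2}(\psi^2)^2$ one gets $(\phi^2)^{-1}(s)=(\psi^2)^{-1}(5s^{1/2})$. (d) Finally, the asymptotics you end with give $(\phi^1)^{-1}(s)(\phi^2)^{-1}(s)\asymp s$, which contradicts \eqref{eq:inequalities} (equivalently the proposition $X_0=\ell_2$) that you simultaneously quote, and which the paper's own proof of this lemma uses. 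If you want to argue by explicit asymptotics, you must first settle the behaviour of $(\phi^2)^{-1}$ honestly from the closed formula and then confront the resulting tension with \eqref{eq:inequalities}; you cannot use both sides at once.
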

\begin{proof}
Let $u_n^4 = \frac{1}{\sqrt{n}}$. We have $A_n = \log\Big(\frac{(\phi^0)^{-1}(u_n^4) (\phi^2)^{-1}( u_n^4)}{((\phi^1)^{-1}( u_n^4))^2}\Big)$. By \eqref{eq:inequalities}
\[
A_n \geq \log\Big(\frac{u_n^4 u_n^2}{((\phi^1)^{-1}(u_n^4))^3}\Big)
\]

The right side is bounded from above if and only if there is $C > 0$ such that $\frac{u_n^6}{((\phi^1)^{-1}(u_n^4))^3} \leq e^C$ for $n$ big enough, that is
\[
\phi^1(\frac{u_n^2}{e^{\frac{C}{3}}}) \leq u_n^4
\]
which cannot happen.
\end{proof}

\begin{lemma}
Let $A_n$ and $B_n$ be as in Lemma \ref{lem:AnBn}. For every $\gamma \in \mathbb{R}$ the sequence $(\left|\gamma A_n - B_n\right|)_n$ is not bounded.
\end{lemma}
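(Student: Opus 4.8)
The plan is to reduce $\gamma A_n - B_n$ to an explicit logarithmic expression in the three quantities $a_n = (\phi^0)^{-1}(1/\sqrt n)$, $b_n = (\phi^1)^{-1}(1/\sqrt n)$ and $c_n = (\phi^2)^{-1}(1/\sqrt n)$, to pin the product $b_n c_n$ down by means of \eqref{eq:inequalities}, and then to invoke the elementary asymptotics of $(\phi^1)^{-1}$ near $0$ (the same ones that drive the previous lemma).

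First I would record, straight from the formulas in Lemma \ref{lem:AnBn}, that
\[
A_n = \log a_n + \log c_n - 2\log b_n, \qquad B_n = \log c_n - \log a_n,
\]
so that
\[
\gamma A_n - B_n = (\gamma+1)\log a_n + (\gamma-1)\log c_n - 2\gamma\log b_n,
\]
and that $a_n = 1/\sqrt n$ because $\phi^0(t) = t$. Put $u_n = n^{-1/8}$, so $a_n = u_n^4$. Applying \eqref{eq:inequalities} once with $t = u_n$ (for the lower bound) and once with $t = 100^{1/4}u_n$ (for the upper bound) gives $u_n^2 \le b_n c_n \le 10\, u_n^2$ for all large $n$, hence $\log b_n + \log c_n = 2\log u_n + O(1) = -\tfrac14\log n + O(1)$. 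Substituting $\log c_n = -\tfrac14\log n - \log b_n + O(1)$ into the expression for $\gamma A_n - B_n$ and collecting terms, one gets
\[
\gamma A_n - B_n = -\frac{3\gamma+1}{4}\log n - (3\gamma-1)\log b_n + O(1).
\]

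It then remains to control $\log b_n$. From $\phi^1(b_n) = 1/\sqrt n$ and $\phi^1(t) = 5^{-2}t^2|\log t|^4$ one obtains $b_n|\log b_n|^2 = 5\, n^{-1/4}$, whence $\log b_n = -\tfrac14\log n - 2\log\log n + O(1)$; in particular $\log b_n = -(\tfrac14 + o(1))\log n$. Plugging this in, the two $\log n$-terms combine, \emph{independently of $\gamma$}, to $\bigl(-\tfrac{3\gamma+1}{4} + \tfrac{3\gamma-1}{4}\bigr)\log n = -\tfrac12\log n$, and therefore
\[
\gamma A_n - B_n = -\tfrac12\log n + 2(3\gamma-1)\log\log n + O(1) \to -\infty \quad\text{as } n\to\infty.
\]
Hence $(|\gamma A_n - B_n|)_n$ is unbounded for every $\gamma\in\mathbb{R}$, with no case distinction required; the values $\gamma = \tfrac13$ and $\gamma = -\tfrac13$, which kill one of the two remaining terms, are covered automatically.

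The only genuinely delicate point is the bookkeeping in the last two steps: one must check that the $O(1)$ coming out of \eqref{eq:inequalities} and the lower-order $\log\log n$ produced by the logarithmic factor of $\phi^1$ can never cancel the leading $-\tfrac12\log n$ — but since the coefficient of $\log n$ comes out to the same negative constant for \emph{every} $\gamma$, this is automatic, and one may alternatively run the argument as a contradiction mimicking the previous lemma (assume $|\gamma A_n - B_n|$ bounded, rearrange to a bound of the form $\phi^1(Du_n^{\alpha})\lessgtr u_n^4$, and note it fails because the power of $u_n$ overwhelms the polylogarithmic factor). It is precisely this vanishing of the $\gamma$-dependence at top order, together with the previous lemma (which disposes of the case in which $\Omega_1$ is trivial), that rules out projective equivalence of $\Omega_1$ and $\Omega_2$; combined with Lemma \ref{lem:realcent} and Lemma \ref{lem:almostthere} this completes the proof of Theorem \ref{thm:example}.
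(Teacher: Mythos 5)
Your proof is correct, but it takes a genuinely different route from the paper. The paper fixes $\gamma$ and argues by cases (seven of them, according to the signs of $\gamma-1$, $3\gamma-1$, $3\gamma+1$), in each case using only \emph{one} side of \eqref{eq:inequalities} with a suitable substitution ($u_n^4=1/\sqrt n$ or $u_n^4/100=1/\sqrt n$) to bound $\gamma A_n-B_n$ by a single logarithmic expression in $(\phi^1)^{-1}$, and then showing that boundedness would force an inequality of the type $\phi^1(u_n^{\alpha}/C)\geq u_n^4$ that fails as $n\to\infty$; no asymptotic expansion of $(\phi^1)^{-1}$ is ever computed. You instead use \emph{both} sides of \eqref{eq:inequalities} (evaluated at $t=u_n$ and $t=100^{1/4}u_n$) to get the two-sided estimate $u_n^2\leq b_nc_n\leq 10\,u_n^2$, eliminate $\log c_n$, and then extract the explicit expansion $\log b_n=-\tfrac14\log n-2\log\log n+O(1)$ from $b_n|\log b_n|^2=5n^{-1/4}$; the payoff is the uniform formula $\gamma A_n-B_n=-\tfrac12\log n+2(3\gamma-1)\log\log n+O(1)$, in which the coefficient of $\log n$ is independent of $\gamma$, so divergence to $-\infty$ holds for every $\gamma$ with no case distinctions and with an explicit rate (and it also recovers the previous lemma, since it gives $A_n=6\log\log n+O(1)$ only as a by-product of $B_n\sim\tfrac12\log n$, not needed here). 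Your bookkeeping checks out: the decomposition of $A_n,B_n$, the coefficients $-\tfrac{3\gamma+1}{4}$ and $-(3\gamma-1)$, and the iteration giving $\log|\log b_n|=\log\log n+O(1)$ (which needs only the crude bounds $c_1\log n\leq|\log b_n|\leq c_2\log n$, immediate from the defining equation) are all valid for $n$ large enough that all quantities lie in the neighborhood of $0$ where the formulas for $\phi^0,\phi^1,\phi^2$ and inequality \eqref{eq:inequalities} hold. In short: the paper's argument is rougher but avoids asymptotics of $(\phi^1)^{-1}$ at the price of a case analysis; yours trades a short asymptotic computation for a unified, quantitative statement.
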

\begin{proof}
We must consider a number of cases.

\textit{Case 1: $\gamma = 1$}

Let $r_n = \frac{1}{\sqrt{n}}$. Then $A_n - B_n = \log\Big(\frac{r_n^2}{((\phi^1)^{-1}(r_n))^2}\Big)$. If $\left|A_n - B_n\right|$ is bounded there is $C$ such that for $n$ big enough $\phi^1(\frac{r_n}{C}) \geq r_n$. For $n$ big enough this is the same as
\[
5^{-2} \frac{r_n}{C^2} \left|\log \frac{r_n}{C}\right|^4 \geq 1
\]

\textit{Case 2: $\gamma - 1 > 0$}

Let $\frac{u_n^4}{100} = \frac{1}{\sqrt{n}}$. Then by \eqref{eq:inequalities}
\[
\gamma A_n - B_n \leq \log\Big(\frac{u_n^{4(\gamma + 1)} u_n^{2(\gamma - 1)}}{100^{\gamma + 1} ((\phi^1)^{-1}(\frac{u_n^4}{100}))^{3\gamma - 1}}\Big)
\]

The right side is bounded from below if and only if for some $C > 0$ we have $\frac{u_n^4}{100} \leq \phi^1\Big(\frac{u_n^{\frac{2(3\gamma + 1)}{3\gamma - 1}}}{C}\Big)$. For $n$ big enough this is the same as
\[
\frac{u_n^{\frac{-8}{3\gamma - 1}}}{100} \leq 5^{-2} \frac{1}{C^2} \left| \log \frac{u_n^{\frac{2(3\gamma + 1)}{3\gamma - 1}}}{C} \right|^4
\]
which cannot happen for $n$ big enough.

The case $\gamma < 1$ must be divided in a number of subcases. Since they are all very similar, we will do one of them as illustration.

\textit{Case 3: $\gamma - 1 < 0$ and $3\gamma - 1 > 0$}

Let $u_n^4 = \frac{1}{\sqrt{n}}$. By \eqref{eq:inequalities}
\[
\gamma A_n - B_n \leq \log\Big(\frac{u_n^{4(\gamma + 1)} u_n^{2(\gamma - 1)}}{((\phi^1)^{-1}(u_n^4))^{3\gamma - 1}}\Big)
\]
Let us show that the sequence on the right has $-\infty$ as limit. Were it bounded from below we should have for some $C > 0$ that $u_n^4 \leq \phi^1(\frac{u_n^{\frac{2(3\gamma + 1)}{3\gamma - 1}}}{C})$. For $n$ big enough this is
\[
u_n^{\frac{-8}{3\gamma - 1}} \leq \frac{5^{-2}}{C^2} \left| \log \frac{u_n^{\frac{2(3\gamma + 1)}{3\gamma - 1}}}{C} \right|^4 
\]
which cannot happen when $n$ grows.

The other cases one should check are 4.\ $\gamma = \frac{1}{3}$; 5.\ $3\gamma - 1 < 0$ and $3\gamma + 1 > 0$; 6.\ $\gamma = -\frac{1}{3}$; and 7.\ $\gamma  < -\frac{1}{3}$.

\end{proof}

\section{Acknowledgements}

We would like to thank F\'elix Cabello S\'anchez, Jes\'us Castillo and Valentin Ferenczi for discussions regarding this work, and Micha\l\ Wojciechowski for suggesting the study of interpolation of families of Orlicz spaces.

\bibliographystyle{amsplain}
\bibliography{refs}

\end{document}